    \newtheorem{lemma}{Lemma}
    \newtheorem{definition}{Definition}
    \newtheorem{remark}{Remark}
    \newtheorem{theorem}{Theorem}
\renewcommand{\qedsymbol}{$\blacksquare$}
\def\Ind#1{\cc{I}_{#1}} 
\def\set#1#2{\{  #1 : #2 \}}
\def\R{\mathbb{R}} 
\def\Z{\mathbb{Z}} 
\def\Dp#1{\mathbb{D}_{++}^{#1}} 
\def\Sp#1{\mathbb{S}_{++}^{#1}} 
\newcommand{\cc}[1]{{\mathcal{#1}}} 
\newcommand{\T}{^\top} 
\def\cP{\cc{P}} 
\def\vecZeros{\mathbf{0}} 
\def\sp#1#2{\langle #1,#2\rangle } 
\def\Sum#1#2{\sum\limits_{#1}^{#2}} 
\def\bmat#1{\left[\begin{array}{#1}} 
\def\emat{\end{array}\right]} 
\def\fracg#1#2{{\displaystyle{\frac{#1}{#2}}}} 
\def\vv#1{{ \rm \bf{#1}}} 
\newcommand{\becauseof}[2][=]{\stackrel{\scriptstyle\mkern-1.5mu#2\mkern-1.5mu}{#1}}
\def\moveEq#1{{}\mkern#1mu} 
\def\cK{\cc{K}} 
\def\cKl{\cK_{-}}
\def\cKu{\cK_{+}}
\def\cD{\cc{D}} 
\newcommand{\yLB}{\underline{y}} \newcommand{\yLBj}{\underline{y}_{(i)}}
\newcommand{\yUB}{\overline{y}}
\newcommand{\yUBj}{\overline{y}_{(i)}}
\def\xe{x_e} 
\def\xs{x_s}
\def\xc{x_c}
\def\ue{u_e} 
\def\us{u_s}
\def\uc{u_c}
\def\ye{y_e}
\def\yej{y_{e (i)}}
\def\ys{y_s}
\def\ysj{y_{s (i)}}
\def\yc{y_c}
\def\ycj{y_{c (i)}}
\def\xh{x_h} 
\def\xhj{x_{h}^{j}} 
\def\xH{\vv{x}_H} 
\def\uh{u_h} 
\def\uhj{u_{h}^{j}} 
\def\uH{\vv{u}_H} 
\def\bz{\tilde{z}}
\newcommand{\zUB}{\overline{z}}
\newcommand{\zLB}{\underline{z}}
\begin{document}
\pagestyle{fancy}

\title{Efficiently solving the harmonic model predictive control formulation}

\author{Pablo~Krupa$^\dagger$,~Daniel~Limon$^\dagger$,~Alberto~Bemporad$^*$,~Teodoro~Alamo$^\dagger$%
\thanks{$^\dagger$ Department of Systems Engineering and Automation, Universidad de Sevilla, 41092, Sevilla, Spain. E-mails: \texttt{pkrupa@us.es}, \texttt{dlm@us.es}, \texttt{talamo@us.es}. Corresponding author: Pablo Krupa.}%
\thanks{$^*$ IMT School for Advanced Studies, Piazza San Francesco 19, Lucca, Italy. Email: \texttt{alberto.bemporad@imtlucca.it}}
\thanks{This work was supported
    in part by Grant PDC2021-121120-C21 funded by MCIN/AEI/10.13039/501100011033 and by the ``European Union NextGenerationEU/PRTR",
    and in part by Grant Margarita Salas (grant number 20122) funded by the Ministerio de Universidades and the European Union (NextGenerationEU).}%
}

\maketitle
\thispagestyle{fancy}

\begin{abstract}
    Harmonic model predictive control (HMPC) is a model predictive control (MPC) formulation which displays several benefits over other MPC formulations, especially when using a small prediction horizon.
    These benefits, however, come at the expense of an optimization problem that is no longer the typical quadratic programming problem derived from most linear MPC formulations due to the inclusion of a particular class of second order cone constraints.
    This article presents a method for efficiently dealing with these constraints in operator splitting methods, leading to a computation time for solving HMPC in line with state of the art solvers for linear MPC.
    We show how to apply this result to the alternating direction method of multipliers algorithm, presenting a solver which we compare against other solvers from the literature, including solvers for other linear MPC formulations.
    The results show that the proposed solver, and by extension the HMPC formulation, is suitable for its implementation in embedded systems.
\end{abstract}

\begin{IEEEkeywords}

predictive control, harmonic model predictive control, convex optimization, embedded systems, ADMM.
\vspace*{-0.5em}

\end{IEEEkeywords}

\section{Introduction}

In the recent publication \cite{Krupa_TAC_2022} (originally presented in \cite{Krupa_CDC_19}), the authors proposed a novel model predictive control (MPC) \cite{Camacho_S_2013} formulation labeled \textit{harmonic model predictive control} (HMPC), which has several advantages over other MPC formulations, such as guaranteed asymptotic stability, recursive feasibility even in the event of a sudden reference change, an increased domain of attraction with respect to other MPC formulations, does not require a positive invariant set of the system, and displays an improved performance of the closed-loop system when using a small prediction horizon, especially for systems with integrator states or slew-rate constraints.

These  advantages, which are highlighted and discussed in detail in \cite{Krupa_TAC_2022, Krupa_CDC_19}, indicate that HMPC is an ideal candidate for its use as an embedded controller, i.e., for its implementation in devices with low computation and memory resources.
The main drawback, however, is that its optimization problem is not the typical quadratic programming (QP) problem derived from most linear MPC formulations.
This is due to the inclusion of several constraints that can be imposed as second-order cone (SOC) constraints.

This article shows that in spite of this drawback, the HMPC formulation can be solved in a computation time comparable to that of MPC formulations whose control law is derived from QP problems using state of the art solvers.

We show how to efficiently deal with the SOC-like constraints of the HMPC formulation by grouping them in pairs and considering their intersection.
We prove an explicit solution to the Euclidean projection onto this intersection of pairs of SOC-like constraints; a fact that can be exploited by several first-order methods.
In particular, the operator splitting methods considered in solvers such as \cite{Garstka_JOTA_2021}, \cite{Stellato_OSQP} or \cite{ODonoghue_SCS_21} can make good use of this, since the pairing (along with the explicit solution of the projection operator) leads to a reduction of the number of decision variables and dimensions of the matrices involved in the solver, leading to a reduction of the computation time when compared to simply solving the original problem by considering SOC constraints.

To show how to solve the resulting optimization problem, we present a solver based on the alternating direction method of multipliers (ADMM) algorithm \cite{Boyd_FTML_2011} using ideas and approaches taken from state of the art solvers \cite{Garstka_JOTA_2021}, \cite{Stellato_OSQP}, \cite{Krupa_TCST_20}.
The resulting solver, which is available in the SPCIES toolbox \cite{SPCIES}, is well suited for its implementation in embedded systems, especially considering that the HMPC formulation is particularly suited for its use with small prediction horizons.

The remainder of this article is structured as follows.
In Section \ref{sec:SOC} we describe the SOC-like sets that we deal with in subsequent sections and present explicit solutions for the Euclidean projection onto them.
We briefly recall the HMPC formulation in Section \ref{sec:HMPC}.
Section \ref{sec:solver} presents the ADMM algorithm and its particularization to HMPC.
Numerical results are presented in Section \ref{sec:results}.
Concluding remarks and a discussion of the computational results are provided in Section~\ref{sec:conclusions}.

\subsubsection*{Notation}

Given two integers $i$ and $j$ with ${j \geq i}$, $\Z_i^j$ denotes the set of integer numbers from $i$ to $j$, i.e. ${\Z_i^j \doteq \{i, i+1, \dots, j-1, j\}}$. 
We denote by $\Sp{n}$ ($\Dp{n}$) the set of (diagonal) positive definite matrices in $\R^{n \times n}$.
Given a set $\cc{X} \subseteq \R^n$, we denote by $\Ind{\cc{X}}$ its indicator function, i.e., $\Ind{\cc{X}}(x) = 0$ if $x \in \cc{X}$ and $\Ind{\cc{X}}(x) = +\infty$ if $x \not\in \cc{X}$.
For vectors $x_1$ to $x_N$, $(x_{1}, x_{2}, \dots, x_{N})$ denotes the column vector formed by their concatenation.
Given a vector $x\in \R^{n}$, we denote its $i$-th component using a parenthesized subindex $x_{(i)}$.
Given two vectors $x \in \R^{n}$ and $y \in \R^{n}$, their standard inner product is denoted by $\sp{x}{y} \doteq \sum_{i=1}^{n} x_{(i)} y_{(i)}$.
For $x \in \R^{n}$ and $A \in \Sp{n}$, $\|x\| \doteq \sqrt{\sp{x}{x}}$, $\|x\|_A \doteq \sqrt{\sp{x}{A x}}$, $\| x \|_\infty \doteq \max_{i = 1 \dots n}{| x_{(i)} |}$.
The Euclidean projection of a vector $x \in \R^n$ onto a set $\cc{X} \subseteq \R^n$ is denoted by $\cP_\cc{X}(x)$, i.e., $\cP_\cc{X}(x) = \arg\min_{v \in \cc{X}} \| v - x \|^2$.

\section{Shifted second order cones} \label{sec:SOC}

This section describes a class of closed convex sets, which we denote by \textit{shifted second order cones} (shifted-SOCs).
We prove an explicit solution for the Euclidean projection onto them and onto the intersection of two ``opposed" shifted-SOCs.
The results and definitions of this section will play a major role in subsequent developments.

\begin{definition}[Shifted second order cone] \label{def:K}
A \textit{shifted second order cone} (shifted-SOC) $\cK_\alpha(c) \subset \R^n$ is a set given by
\begin{equation} \label{eq:K}
    \cK_\alpha(c) = \set{z = (z_0, z_1) \in \R \times \R^{n-1}}{\|z_1\| \leq \alpha(z_0 - c)},
\end{equation}
where $\alpha \in \{ 1, -1\}$ and $c \in \R$.
For convenience, let us denote by $\cKl(c) \doteq \cK_{-1}(c)$ and $\cKu(c) \doteq \cK_{1}(c)$, where we may drop the ``$(c)$" if it is clear from the context.
\end{definition}

The following theorem provides an explicit solution for the Euclidean projection onto $\cK_\alpha(c)$.
Its proof is heavily inspired by the proof of \cite[Theorem 3.3.6]{Bauschke_Thesis_1996}, which proves an explicit solution onto \eqref{eq:K} for $c = 0$ and $\alpha > 0$.

\begin{theorem} \label{theo:proj:K}
    Let $z = (z_0, z_1) \in \R \times \R^{n-1}$ and $\cK_{\alpha}(c) \subset \R^n$ be given by Definition \ref{def:K} for some $\alpha \in \{1, -1\}$ and $c \in \R$.
    The Euclidean projection of $z$ onto $\cK \doteq \cK_\alpha(c)$ is given by
    \begin{subnumcases}{\cc{P}_{\cK}(z) = } \label{eq:projection:K}
    z & $\text{if } \|z_1\| \leq \alpha ( z_0 - c )$ \label{eq:proj:K:1} \\
    (c, 0) & $\text{if } \|z_1\| \leq -\alpha ( z_0 - c )$ \label{eq:proj:K:2}\\
    \left( \tau \alpha + c, \fracg{\tau z_1}{\|z_1\|} \right) & $\text{otherwise,}$ \label{eq:proj:K:3}
\end{subnumcases}
where $\tau = \fracg{1}{2}(\alpha (z_0 - c) + \|z_1\|)$. 
\end{theorem}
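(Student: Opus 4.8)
The plan is to reduce the general shifted case $\cK_\alpha(c)$ to the standard second-order-cone case ($c=0$, $\alpha=1$) by an affine change of variables, apply the known projection formula there, and translate back. First I would introduce the bijection $\phi_\alpha(w_0, w_1) = (\alpha w_0 + c, \, w_1)$ mapping $\R \times \R^{n-1}$ onto itself; a direct check shows $\phi_\alpha$ maps the ordinary SOC $\cc{L} \doteq \{(w_0,w_1) : \|w_1\| \le w_0\}$ onto $\cK_\alpha(c)$, and that $\phi_\alpha$ is an isometry (since $\alpha \in \{1,-1\}$, the linear part is orthogonal) composed with a translation. Consequently $\cP_{\cK_\alpha(c)}(z) = \phi_\alpha\big(\cP_{\cc{L}}(\phi_\alpha^{-1}(z))\big)$, where $\phi_\alpha^{-1}(z) = (\alpha(z_0 - c), \, z_1)$ since $\alpha^{-1} = \alpha$.

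Next I would invoke the classical explicit formula for projection onto the standard SOC $\cc{L}$ (this is exactly the $c=0$, $\alpha>0$ case referenced from \cite[Theorem 3.3.6]{Bauschke_Thesis_1996}): writing $w = (w_0, w_1)$, one has $\cP_{\cc{L}}(w) = w$ if $\|w_1\| \le w_0$; $\cP_{\cc{L}}(w) = (0,0)$ if $\|w_1\| \le -w_0$; and otherwise $\cP_{\cc{L}}(w) = \mu\,(1, w_1/\|w_1\|)$ with $\mu = \tfrac12(w_0 + \|w_1\|)$. Substituting $w_0 = \alpha(z_0-c)$ and $w_1 = z_1$ gives the three regimes: $\|z_1\| \le \alpha(z_0-c)$, $\|z_1\| \le -\alpha(z_0-c)$, and the remaining case, with $\mu = \tfrac12(\alpha(z_0-c) + \|z_1\|) = \tau$. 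Applying $\phi_\alpha$ to each branch — i.e. sending $(w_0, w_1) \mapsto (\alpha w_0 + c, w_1)$ — turns $w \mapsto (\alpha \cdot \alpha(z_0-c) + c, z_1) = (z_0, z_1) = z$ in the first branch, $(0,0) \mapsto (c, 0)$ in the second, and $\tau(1, z_1/\|z_1\|) \mapsto (\tau\alpha + c, \, \tau z_1/\|z_1\|)$ in the third, which is precisely \eqref{eq:proj:K:1}--\eqref{eq:proj:K:3}.

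The only genuine content beyond bookkeeping is the fact that projection commutes with the affine isometry $\phi_\alpha$, i.e. $\cP_{\phi(\cc{C})}(\phi(w)) = \phi(\cP_{\cc{C}}(w))$ for any closed convex $\cc{C}$ when $\phi(x) = Qx + b$ with $Q$ orthogonal: this follows because $\|\phi(v) - \phi(w)\|^2 = \|Q(v-w)\|^2 = \|v-w\|^2$, so minimizing $\|u - \phi(w)\|^2$ over $u \in \phi(\cc{C})$ is the same, after the substitution $u = \phi(v)$, as minimizing $\|v - w\|^2$ over $v \in \cc{C}$. I anticipate the main obstacle to be largely expository rather than mathematical: one should either cite the standard SOC projection formula cleanly or, if a self-contained argument is preferred, reproduce the short Lagrangian/KKT derivation for $\cc{L}$ (the boundary case, where the active constraint forces $u_0 = \|u_1\|$ and a one-dimensional optimization in the radial direction yields $\mu = \tfrac12(w_0+\|w_1\|)$). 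A minor point to handle with care is the overlap of the two inequality conditions when $z_1 = 0$ and $z_0 = c$, where both \eqref{eq:proj:K:1} and \eqref{eq:proj:K:2} apply and both return the same point $(c,0)$, so the piecewise definition is consistent; and the ``otherwise'' branch is only reached when $\|z_1\| > 0$, so the division by $\|z_1\|$ is well defined.
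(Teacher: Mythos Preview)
Your reduction via the affine isometry $\phi_\alpha(w_0,w_1)=(\alpha w_0+c,\,w_1)$ is correct and complete: the map is indeed an isometry of $\R^n$ (orthogonal linear part plus translation), it carries the standard Lorentz cone $\cc{L}$ onto $\cK_\alpha(c)$, projections commute with affine isometries for exactly the reason you give, and your case-by-case translation of the classical formula reproduces \eqref{eq:proj:K:1}--\eqref{eq:proj:K:3} verbatim. The well-definedness remark about the ``otherwise'' branch is also handled correctly.

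The paper takes a different route. Rather than reducing to the known $c=0$, $\alpha=1$ case, it argues directly from the variational characterization of projection (Theorem~\ref{theorem:projection}): for each branch it exhibits the candidate projection, checks membership in $\cK$, and then verifies the inequality $\langle y-v^*,\,z-v^*\rangle\le 0$ for every $y\in\cK$ by explicit computation (using Cauchy--Schwarz and the defining inequality of the cone). Your approach is shorter and conceptually cleaner, since it isolates the only nontrivial content---the standard SOC projection---and delegates it to \cite[Theorem 3.3.6]{Bauschke_Thesis_1996}; the price is a dependence on that citation. The paper's approach is more self-contained but essentially reproduces the Bauschke argument with the parameters $\alpha$ and $c$ carried through the algebra, which is longer and somewhat obscures the fact that the general case is isometric to the standard one. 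Either proof is acceptable; yours would be preferable if brevity is valued and the cited result is considered standard.
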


\begin{proof}\renewcommand{\qedsymbol}{}
    See Appendix \ref{app:proof:proj:K}.
\end{proof}

We are now interested in the following set, obtained from the intersection of two ``opposed" shifted-SOCs.
Let us denote by $\cD(\zUB, \zLB) \subset \R^n$, where $\zUB, \zLB \in \R$, the set given by
\begin{equation} \label{eq:D}
    \cD(\zUB, \zLB) \doteq \set{z \in \R^n}{z \in \cKl(\zUB) \cap \cKu(\zLB)}.
\end{equation}
Once again, we may drop the ``$(\zUB, \zLB)$" if it is clear from the context.
Set $\cD$ is closed and convex, since it is the intersection of two closed convex sets \cite[Prop. 1.1.1(a)]{Bertsekas_Convex_2009}, \cite[Prop. A.2.4(b)]{Bertsekas_Convex_2009}.
Additionally, it is non-empty if $\zLB \leq \zUB$, as we state in the following lemma.

\begin{lemma} \label{lemma:nonempty:D}
Let $\cD(\zUB, \zLB) \subset \R^n$ be given by \eqref{eq:D} for some $\zUB, \zLB \in \R$. Then, $\cD$ is non-empty iff $\zLB \leq \zUB$.
\end{lemma}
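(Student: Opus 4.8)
The plan is to prove both directions of the equivalence. For the ``only if'' direction, I would argue by contrapositive: suppose $\zLB > \zUB$ and show $\cD(\zUB, \zLB) = \emptyset$. If $z = (z_0, z_1) \in \cKl(\zUB) \cap \cKu(\zLB)$, then from $z \in \cKu(\zLB)$ we have $\|z_1\| \leq z_0 - \zLB$, and from $z \in \cKl(\zUB)$ we have $\|z_1\| \leq -(z_0 - \zUB) = \zUB - z_0$. Adding these two inequalities gives $2\|z_1\| \leq \zUB - \zLB < 0$, which is impossible since $\|z_1\| \geq 0$. Hence no such $z$ exists and $\cD$ is empty, which establishes that $\cD$ non-empty implies $\zLB \leq \zUB$.

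For the ``if'' direction, assume $\zLB \leq \zUB$ and exhibit an explicit point in $\cD$. The natural candidate is the ``midpoint'' $z = \left( \frac{\zUB + \zLB}{2}, 0 \right) \in \R \times \R^{n-1}$. With this choice $\|z_1\| = 0$, and we must check $0 \leq z_0 - \zLB = \frac{\zUB - \zLB}{2}$ and $0 \leq -(z_0 - \zUB) = \frac{\zUB - \zLB}{2}$; both hold precisely because $\zLB \leq \zUB$. Therefore $z \in \cKu(\zLB) \cap \cKl(\zUB) = \cD(\zUB, \zLB)$, so $\cD$ is non-empty.

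There is no real obstacle here; the statement is elementary once one writes out the two defining inequalities from Definition \ref{def:K}. The only mild subtlety is bookkeeping the signs of $\alpha$ correctly for $\cKl = \cK_{-1}$ versus $\cKu = \cK_{1}$, so I would be careful to substitute $\alpha = -1$ and $\alpha = 1$ explicitly rather than manipulating the generic form. Combining the two directions completes the proof of the lemma.
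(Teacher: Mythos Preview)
Your proof is correct and matches the paper's own argument essentially line for line: both directions use the same inequalities (adding $\|z_1\|\le z_0-\zLB$ and $\|z_1\|\le \zUB-z_0$ to get $2\|z_1\|\le \zUB-\zLB$) and the same explicit witness $z=\big((\zUB+\zLB)/2,\,0\big)$. The only cosmetic difference is that you phrase the forward implication as a contrapositive, whereas the paper states it directly.
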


\begin{proof}
First, assume that $\cD$ is non-empty and take any $z\in\cD$.
Then, from $z\in\cKl(\zUB)$ and $z\in\cKu(\zLB)$ we have that $\|z_1\| \leq \zUB - z_0$ and $\|z_1\| \leq z_0 - \zLB$, which leads to $\zUB - \zLB \geq 2 \| z_1 \| \geq 0$.
Next, assume that $\zLB \leq \zUB$ and consider the vector $z = ((\zUB +\zLB)/2, 0) \in \R\times\R^{n-1}$. It is easy to verify that $z\in\cKl$ and $z\in\cKu$, thus $z\in\cD$.
\end{proof}

The projection onto set $\cD$ could be performed using one of many methods from the literature for projecting onto the intersections of convex sets \cite{Boyle_1986}, \cite{Stovsic_projection_2016}.
These methods typically consider the Euclidean projection of a vector onto a non-empty closed convex set $\cc{C} = \cc{C}_1 \cap \cc{C}_2 \cap \cdots \cap \cc{C}_r$, where $r > 0$ is finite, and it is assumed that the sets $\cc{C}_i$ are closed and convex and that $\cP_{\cc{C}_i}$ has a known solution for $i \in \Z_1^r$.
They are employed because, in general, the projection onto the intersection of convex sets is not guaranteed to be the result of projecting onto each set $\cc{C}_i$ in order, even if $r = 2$.

However, we will show that this is not the case for the projection onto set $\cD$, which can be obtained by first projecting onto $\cKu$ and then projecting the resulting vector onto $\cKl$, both of which have simple explicit solutions given by Theorem \ref{theo:proj:K}.
This result will allow us to directly use sets $\cD$ in future developments without having to result to an iterative method to compute the projection onto them, which would be computationally expensive, especially if a good approximation of the projection is required.

\begin{algorithm}[t] 
    \DontPrintSemicolon
    \SetAlgoNoEnd
    \caption{Dykstra's algorithm for $\cc{C} = \cc{C}_1 \cap \cc{C}_2$.} \label{alg:Dykstra}
    \KwIn{$z \in \R^n$}
    $w^0 \gets z$, $p^0 \gets \vecZeros_n$, $q^0 \gets \vecZeros_n$\;
    \ForEach{$k \geq 1$}{
        $v^k \gets \cP_{\cc{C}_1} (w^{k-1} + p^{k-1})$\;
        $p^k \gets w^{k-1} + p^{k-1} - v^k$\;
        $w^k \gets \cP_{\cc{C}_2} (v^k + q^{k-1})$\;
        $q^k \gets v^k + q^{k-1} - w^k$\;
    }
\end{algorithm}

The following theorem states a direct solution of the projection onto a non-empty set $\cD$.
Its proof is based on making use of the following lemma, which states the condition for the convergence after a single iteration of Algorithm \ref{alg:Dykstra}, obtained from \cite[\S 3]{Bauschke_Dykstra_94}, which is a particularization of Dykstra's algorithm \cite{Boyle_1986} to finding the projection of $z \in \R^n$ onto $\cc{C} = \cc{C}_1 \cap \cc{C}_2$.
It generates iterates $v^k$ and $w^k$ satisfying $\|v^k - \cP_{\cc{C}}(z) \| \rightarrow 0$ and $\| w^k - \cP_\cc{C}(z) \| \rightarrow 0$ as $k \rightarrow + \infty$ \cite[Theorem 2]{Boyle_1986}.

\begin{lemma} \label{lemma:convergence:Dykstra}
Let $\cc{C} = \cc{C}_1 \cap \cc{C}_2$ be a non-empty closed convex set and $\cc{C}_1, \cc{C}_2 \subseteq \R^n$ be closed convex sets.
Consider Algorithm \ref{alg:Dykstra} for finding $\cP_\cc{C}(z)$ for $z \in \R^n$.
Then, $v^2 = w^1 \Rightarrow w^1 = \cP_\cc{C}(z)$.
\end{lemma}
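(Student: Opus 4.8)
The plan is to verify the two defining properties of the Euclidean projection $\cP_\cc{C}(z)$ for the point $w^1$: that $w^1 \in \cc{C}$, and that $w^1$ satisfies the variational inequality $\sp{z - w^1}{x - w^1} \leq 0$ for every $x \in \cc{C}$. Since $\cc{C}$ is non-empty, closed and convex, these two facts together are equivalent to $w^1 = \cP_\cc{C}(z)$, and throughout I will use the analogous characterization for a generic non-empty closed convex set $\cc{S}$: $u = \cP_{\cc{S}}(y)$ iff $u \in \cc{S}$ and $\sp{y - u}{x - u} \leq 0$ for all $x \in \cc{S}$.

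First I would unroll the first two passes of Algorithm \ref{alg:Dykstra}. Since $p^0 = q^0 = \vecZeros_n$ and $w^0 = z$, we obtain $v^1 = \cP_{\cc{C}_1}(z)$, $p^1 = z - v^1$, $w^1 = \cP_{\cc{C}_2}(v^1)$, $q^1 = v^1 - w^1$, and $v^2 = \cP_{\cc{C}_1}(w^1 + p^1)$. By construction $w^1 \in \cc{C}_2$; and the hypothesis $v^2 = w^1$ says $w^1 = \cP_{\cc{C}_1}(w^1 + p^1) \in \cc{C}_1$. Hence $w^1 \in \cc{C}_1 \cap \cc{C}_2 = \cc{C}$, which is the membership requirement.

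Next I would write down the variational inequalities of the two projections landing on $w^1$. From $w^1 = \cP_{\cc{C}_2}(v^1)$ we get $\sp{v^1 - w^1}{x - w^1} = \sp{q^1}{x - w^1} \leq 0$ for all $x \in \cc{C}_2$. From $w^1 = \cP_{\cc{C}_1}(w^1 + p^1)$ (the consequence of $v^2 = w^1$) we get $\sp{(w^1 + p^1) - w^1}{x - w^1} = \sp{p^1}{x - w^1} \leq 0$ for all $x \in \cc{C}_1$. Then, using $z - w^1 = (z - v^1) + (v^1 - w^1) = p^1 + q^1$, for any $x \in \cc{C} \subseteq \cc{C}_1$ and $x \in \cc{C} \subseteq \cc{C}_2$ I add the two inequalities to obtain $\sp{z - w^1}{x - w^1} = \sp{p^1}{x - w^1} + \sp{q^1}{x - w^1} \leq 0$. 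Together with $w^1 \in \cc{C}$ this yields $w^1 = \cP_\cc{C}(z)$.

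The argument is essentially a short computation, so I do not anticipate a real obstacle; the only subtle point is recognizing that the hypothesis $v^2 = w^1$ is used twice and for two different purposes — it is exactly what places $w^1$ inside $\cc{C}_1$, and simultaneously it turns the optimality condition of the second $\cc{C}_1$-projection into the inequality $\sp{p^1}{x - w^1} \leq 0$, which is the missing piece that makes the telescoped sum $p^1 + q^1$ collapse to the desired variational inequality for $\cc{C}$.
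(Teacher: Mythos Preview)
Your proof is correct, but it proceeds along a different line from the paper's argument. The paper does not verify the variational inequality directly; instead, it shows that the hypothesis $v^2 = w^1$ forces the algorithm into a fixed point: one checks that $p^2 = p^1$, $w^2 = \cP_{\cc{C}_2}(v^2 + q^1) = \cP_{\cc{C}_2}(v^1) = w^1$, and $q^2 = q^1$, so every subsequent iterate repeats $(v^2, p^2, w^2, q^2)$. The conclusion $w^1 = \cP_\cc{C}(z)$ then follows by invoking the known convergence of Dykstra's algorithm \cite[Theorem~2]{Boyle_1986}.

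Your route is more self-contained: it uses only the variational characterization of the projection (Theorem~\ref{theorem:projection}) and the telescoping identity $z - w^1 = p^1 + q^1$, avoiding any appeal to the asymptotic convergence theorem for Dykstra. The paper's approach is slightly shorter once one is willing to cite the external result, and it also makes explicit that the iterates stabilize (which is conceptually nice in the context where the lemma is used). Your approach, on the other hand, is elementary and would stand even without the Boyle--Dykstra convergence theory in the background.
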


\begin{proof}
The first iterate of Algorithm \ref{alg:Dykstra} satisfies
\begin{equation*}
    v^1 = \cP_{\cc{C}_1}(z), \quad p^1 = z - v^1, \quad w^1 = \cP_{\cc{C}_2}(v^1), \quad q^1 = v^1 - w^1.
\end{equation*}
Then, if $v^2 = w^1$, we have that
\begin{align*}
    &v^2 = \cP_{\cc{C}_1}(w^1 + p^1) = w^1, \;\; p^2 = w^1 + p^1 - w^1 = p^1, \\
    &w^2 = \cP_{\cc{C}_2}(v^2 + v^1 - w^1) = \cP_{\cc{C}_2}(v^1) = w^1, \\
    &q^2 = v^2 + q^1 - w^2 = q^1.
\end{align*}
Therefore, iterations $k > 2$ will return the same results as $k = 2$, meaning we have reached a fixed point of the algorithm, and thus $w^2 = w^1 = \cP_{\cc{C}}(z)$ \cite[Theorem 2]{Boyle_1986}.
\end{proof}

\begin{theorem} \label{theo:proj:D}
    Let ${z = (z_0, z_1) \in \R \times \R^{n-1}}$ and $\cD(\zUB, \zLB)$ be the set given by \eqref{eq:D} for some $\zUB, \zLB \in \R$ satisfying $\zLB \leq \zUB$. Then, the projection of $z$ onto $\cD$ is given by $\cP_\cD(z) = \cP_{\cKl} \left( \cP_{\cKu} (z) \right)$.
\end{theorem}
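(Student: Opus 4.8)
The plan is to recognize $\cP_\cD(z)$ as the result of a single iteration of Dykstra's algorithm and then apply Lemma~\ref{lemma:convergence:Dykstra}. Set $\cc{C}_1 \doteq \cKu(\zLB)$ and $\cc{C}_2 \doteq \cKl(\zUB)$, so that $\cc{C} \doteq \cc{C}_1 \cap \cc{C}_2 = \cD$, which is non-empty by Lemma~\ref{lemma:nonempty:D} since $\zLB \leq \zUB$. Running Algorithm~\ref{alg:Dykstra} on $z$ with this choice, the first iterate produces $v^1 = \cP_{\cKu}(z)$, $p^1 = z - v^1$, and $w^1 = \cP_{\cKl}(v^1) = \cP_{\cKl}(\cP_{\cKu}(z))$, which is exactly the quantity the theorem claims equals $\cP_\cD(z)$. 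By Lemma~\ref{lemma:convergence:Dykstra}, it therefore suffices to show $v^2 = w^1$, and since $v^2 = \cP_{\cKu}(w^1 + p^1) = \cP_{\cKu}(w^1 + z - v^1)$, the whole argument reduces to verifying
\begin{equation*}
    \cP_{\cKu}\!\left( w^1 + z - v^1 \right) = w^1 .
\end{equation*}

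To prove this I would use the variational characterization of the Euclidean projection: the displayed identity holds if and only if (i)~$w^1 \in \cKu$ and (ii)~$\sp{z - v^1}{y - w^1} \leq 0$ for all $y \in \cKu$. For (i), write $v^1 = (v^1_0, v^1_1) \in \R \times \R^{n-1}$ and feed $v^1 \in \cKu(\zLB)$ into Theorem~\ref{theo:proj:K} applied to $w^1 = \cP_{\cKl(\zUB)}(v^1)$; this puts $w^1$ into one of the three forms $v^1$, $(\zUB, 0)$, or $(\zUB - \tau, (\tau/\|v^1_1\|) v^1_1)$ with $\tau = (\zUB - v^1_0 + \|v^1_1\|)/2 \geq 0$, and a one-line check gives $w^1 \in \cKu(\zLB)$ in each case: trivially in the first, using $\zLB \leq \zUB$ in the second, and using that $\|w^1_1\| \leq w^1_0 - \zLB$ is equivalent to $\|v^1_1\| \leq v^1_0 - \zLB$ (which holds since $v^1 \in \cKu(\zLB)$) in the third.

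For (ii), split $\sp{z - v^1}{y - w^1} = \sp{z - v^1}{y - v^1} + \sp{z - v^1}{v^1 - w^1}$. The first summand is $\leq 0$ for every $y \in \cKu$ because $v^1 = \cP_{\cKu}(z)$. For the second summand I would use that the residual of an Euclidean projection onto a shifted-SOC lies in the opposite unshifted cone --- reading off the three cases of Theorem~\ref{theo:proj:K} one checks that $z - \cP_{\cK_\alpha(c)}(z) \in \cK_{-\alpha}(0)$ always (the non-trivial case giving a residual that is a non-negative multiple of $(-\alpha, z_1/\|z_1\|)$). Applying this with $(\alpha, c) = (1, \zLB)$ gives $z - v^1 \in \cKl(0)$, and with $(\alpha, c) = (-1, \zUB)$ gives $v^1 - w^1 \in \cKu(0)$; equivalently, both $v^1 - z$ and $v^1 - w^1$ lie in the self-dual second-order cone $\cKu(0)$, so $\sp{v^1 - z}{v^1 - w^1} \geq 0$, i.e.\ $\sp{z - v^1}{v^1 - w^1} \leq 0$. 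Hence (ii) holds, so $v^2 = w^1$ and Lemma~\ref{lemma:convergence:Dykstra} yields $\cP_\cD(z) = w^1 = \cP_{\cKl}(\cP_{\cKu}(z))$.

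The step I expect to be the genuine obstacle is the inequality $\sp{z - v^1}{v^1 - w^1} \leq 0$: the variational inequality for $v^1$ alone produces the opposite sign here (in fact this inner product turns out to be exactly $0$), so the cone geometry is essential --- either via the explicit residual formulas of Theorem~\ref{theo:proj:K} together with self-duality of $\cKu(0)$, as above, or, equivalently, via Moreau's decomposition for the translated cones $\cKu(\zLB)$ and $\cKl(\zUB)$. Everything else is routine; the only care needed is bookkeeping which branch of Theorem~\ref{theo:proj:K} produces $v^1$ and which produces $w^1$ when checking the membership~(i).
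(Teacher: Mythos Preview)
Your proof is correct and shares the same outer scaffolding as the paper's argument (apply Dykstra with $\cc{C}_1=\cKu$, $\cc{C}_2=\cKl$, and invoke Lemma~\ref{lemma:convergence:Dykstra} by showing $v^2=w^1$), but the way you establish $v^2=w^1$ is genuinely different. The paper proceeds by an exhaustive case analysis: five cases according to which branch of Theorem~\ref{theo:proj:K} produces $v^1$ and then $w^1$, in each case computing $v^2$ explicitly (sometimes with a further subcase split, and in Case~3 by a contradiction argument to show $w^1\in\cKu$). You instead reduce $v^2=w^1$ to the variational inequality for the projection onto $\cKu$ and handle it with two conceptual observations: (i) a short three-case check that $w^1\in\cKu$, and (ii) the cone-geometric fact that the projection residual onto $\cK_\alpha(c)$ always lies in $\cK_{-\alpha}(0)$, so that $v^1-z$ and $v^1-w^1$ both lie in the self-dual cone $\cKu(0)$ and hence $\sp{z-v^1}{v^1-w^1}\le 0$. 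This replaces most of the paper's computation with a structural argument that would transfer to any pair of translated self-dual cones; the paper's approach, by contrast, is entirely self-contained and requires no appeal to self-duality or Moreau-type facts, at the price of more bookkeeping. Your parenthetical remark that the cross term is in fact exactly zero is also correct (the residuals lie on the boundary of $\cKu(0)$ and share the same $z_1/\|z_1\|$ direction whenever both are nonzero), though only the inequality is needed.
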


\begin{proof}\renewcommand{\qedsymbol}{}
    See Appendix \ref{app:proof:proj:D}.
\end{proof}


\section{Harmonic model predictive control} \label{sec:HMPC}

The HMPC formulation \cite{Krupa_TAC_2022}, \cite{Krupa_CDC_19}, considers a controllable linear time-invariant system described by the discrete state space model
\begin{equation} \label{eq:model}
    x(t+1) = A x(t) + B u(t),
\end{equation}
where $x(t) \in \R^{n_x}$ and $u(t) \in \R^{n_u}$ are the state and control input at the discrete time instant $t$, respectively, subject to
\begin{equation} \label{eq:constraints}
    \yLB \leq E x(t) + F u(t) \leq \yUB,
\end{equation}
where we assume that the bounds $\yLB, \yUB \in \R^{n_y}$ satisfy $\yLB < \yUB$.

The HMPC formulation is inspired by the \textit{MPC for tracking} (MPCT) formulation \cite{Ferramosca_A_2009}, \cite{Limon_A_2008}, whose difference with classical MPC formulations is that it includes an \textit{artificial reference}, which is forced to be a steady-state of \eqref{eq:model} satisfying \eqref{eq:constraints},
as decision variables in the optimization problem

The idea behind HMPC is to substitute this steady-state artificial reference by one in the form of the periodic signals $x_h^j$, $u_h^j$, called the \textit{artificial harmonic reference}, whose value at each discrete time instant $j\in\Z$ is given by
\begin{subequations} \label{eq:harmonic:signals}
\begin{align}
    \xhj &= \xe + \xs \sin(w ({j}{-}{N})) + \xc \cos(w ({j}{-}{N})), \label{eq:x_h}\\
    \uhj &= \ue + \us \sin(w ({j}{-}{N})) + \uc \cos(w ({j}{-}{N})), \label{eq:u_h}
\end{align}
\end{subequations}
i.e., to use a harmonic signal, thus the name of the formulation, with base frequency $w \geq 0$ parameterized by $\xe$, $\xs$, $\xc \in \R^{n_x}$ and $\ue$, $\us$, $\uc \in \R^{n_u}$.
Let us introduce the following notation to simplify future developments:
\begin{align*}
    &\xH \doteq (\xe, \xs, \xc) \in \R^{n_x} \times \R^{n_x} \times \R^{n_x}, \\
    &\uH \doteq (\ue, \us, \uc) \in \R^{n_u} \times \R^{n_u} \times \R^{n_u}, 
\end{align*}
\vspace{-2.4em}
\begin{align*}
    \ye = E \xe + F \ue, \; \ys = E \xs + F \us, \; \yc = E \xc + F \uc.
\end{align*}

For a given prediction horizon $N > 0$ and base frequency $w \geq 0$, the HMPC control law for a given state $x(t) \in \R^{n_x}$ and reference $(x_r, u_r) \in \R^{n_x} \times \R^{n_u}$ is derived from
\begin{subequations} \label{eq:HMPC} 
\begin{align}
    \min\limits_{\substack{\vv{x},\vv{u},\\ \xH, \uH}} \;& \Sum{j = 0}{N-1} \ell_h(x^j, u^j, \xhj, \uhj) + V_h(\xH, \uH; x_r, u_r) \label{eq:HMPC:cost} \\
    {\rm s.t.} \;& x^0 = x(t) \label{eq:HMPC:cond:inic}\\
    &x^{j+1} = A x^j + B u^j, \; j\in\Z_0^{N-1} \label{eq:HMPC:dynamics}\\
    & \yLB \leq E x^j + F u^j \leq \yUB, \; j\in\Z_0^{N-1} \label{eq:HMPC:constraints}\\
    & A x^{N-1} + B u^{N-1} = \xe + \xc \label{eq:HMPC:xN}\\
    & \xe = A \xe + B \ue \label{eq:HMPC:xe}\\
    & \xs \cos(w) - \xc \sin(w) = A \xs + B \us \label{eq:HMPC:xs}\\
    & \xs \sin(w) + \xc \cos(w) = A \xc + B \uc \label{eq:HMPC:xc}\\
    & (\yej, \ysj, \ycj) \in \cD(\yUBj, \yLBj), \; i\in\Z_1^{n_y}, \label{eq:HMPC:D}
\end{align}
\end{subequations}
where $\vv{x} = (x^0, \dots, x^{N-1} )$, $\vv{u} = ( u^0, \dots, u^{N-1} )$, and the two terms of the cost function are given by the stage cost function
\begin{equation*}
    \ell_h(x, u, \xh, \uh) = \| x - \xh \|_Q^2 + \| u - \uh \|_R^2,
\end{equation*}
where $Q\in\Sp{n_x}$ and $R\in\Sp{n_u}$; and the offset cost function 
\begin{align*}
    V_h(\cdot) &= \| \xe - x_r \|_{T_e}^2 + \| \ue - u_r \|_{S_e}^2 \nonumber \\
               &\quad+ \| \xs \|_{T_h}^2 + \| \xc \|_{T_h}^2 + \| \us \|_{S_h}^2 + \| \uc \|_{S_h}^2,
\end{align*}
where $T_e\in\Sp{n_x}$, $T_h \in\Dp{n_x}$, $S_e\in\Sp{n_u}$, and $S_h \in\Dp{n_u}$.
Note that constraint \eqref{eq:HMPC:D} is imposing $n_y$ constraints onto sets $\cD \subset \R^3$ defined in \eqref{eq:D}.

As is typical in MPC, the summation of the stage cost function $\ell_h$ is penalizing the discrepancy between the predicted states $x^j$ and inputs $u^j$ with the reference, although in this case the discrepancy is with respect to the artificial reference at prediction time $j$, i.e., $\xhj$ and $\uhj$, respectively.
The offset cost function is penalizing two conceptually different things:
\begin{itemize}
    \item The distances $\| \xe - x_r \|^2_{T_e}$ and $\| \ue - u_r \|^2_{S_e}$.
        This penalization will make the ``center" of the artificial harmonic reference signal move towards the reference, eventually reaching it if is an admissible steady-state of the system.
    \item The magnitude of $\xs$, $\xc$, $\us$ and $\uc$.
        This will force the artificial harmonic reference signal to converge to the steady-state $(\xe, \ue)$ as $k \rightarrow +\infty$, since it will converge towards $\xs = \xc = 0$ and $\us = \uc = 0$ as $k \rightarrow \infty$.
\end{itemize}
The result of this, as stated and proven in \cite[Theorem 3]{Krupa_TAC_2022}, is that the closed-loop system will asymptotically converge to $(x_r, u_r)$ if it is an admissible steady-state of the system, or to the admissible steady-state $(x_t, u_t)$ that minimizes the distance $\| x_t - x_r \|^2_{T_e} + \| u_t - u_r \|^2_{S_e}$ otherwise.
In both cases, the closed-loop system will satisfy the constraints \eqref{eq:constraints} under nominal conditions.

Constraints \eqref{eq:HMPC:cond:inic}-\eqref{eq:HMPC:constraints} impose the typical MPC constraints: current system state, system dynamics and system constraints, respectively.
Constraint \eqref{eq:HMPC:xN} forces the terminal state $x^N$ to reach the artificial harmonic reference, i.e., $x^N = x_{h}^N$, and \eqref{eq:HMPC:xe} that $(x_e, u_e)$ must be a steady-state of the system.
The satisfaction of \eqref{eq:HMPC:xs}-\eqref{eq:HMPC:xc} along with \eqref{eq:HMPC:xe} results in an artificial harmonic reference \eqref{eq:harmonic:signals} that satisfies the system dynamics \eqref{eq:model}, i.e., it satisfies $x_{h}^{j+1} = A \xhj + B \uhj$, $\forall j$ (see \cite[Property 2]{Krupa_TAC_2022}).
Finally, the constraints \eqref{eq:HMPC:D} enforce that the artificial harmonic reference satisfies the system constraints \eqref{eq:constraints} (see \cite[Property 3]{Krupa_TAC_2022}).
The reason for imposing the system constraints on the artificial harmonic reference this way is that the satisfaction of \eqref{eq:HMPC:D} implies the feasibility of $\xhj$ and $\uhj$ for all $j$.
Therefore, the system constraints \eqref{eq:constraints} can be imposed on the artificial reference by only $n_y$ constraints.
A more naive approach would have resulted in a number of constraints that depends on the values of $N$ and $w$.
The downside, however, is that we add constraints on the intersection of shifted-SOC constraints, leading to an optimization problem that is no longer the typical QP obtained from most linear MPC formulations.

For a more detailed description and discussion of this formulation and its advantages we refer the reader to \cite{Krupa_TAC_2022}, \cite{Krupa_CDC_19}.

\begin{remark}
The HMPC formulation \eqref{eq:HMPC} can be posed as a SOC programming problem by first separating each constraint \eqref{eq:HMPC:D} into two constraints, one on $\cKu$ and one on $\cKl$.
In this case, problem \eqref{eq:HMPC} can be solved using SOC programming solvers such as \cite{ODonoghue_SCS_21}.
However, by doing so, we have $2 n_y$ SOC constraints instead of the $n_y$ constraints \eqref{eq:HMPC:D}.
Our use of the sets $\cD$ leads to a smaller number of constraints, which can have a major impact on the computation time of the solver when $n_y$ is large or when using certain operator splitting approaches such as the one used in \cite{Garstka_JOTA_2021}, \cite{Stellato_OSQP}, \cite{ODonoghue_SCS_21}.
\end{remark}

\section{ADMM solver for HMPC} \label{sec:solver}

This section shows how the HMPC formulation can be solved taking into account the sets and projection theorems from Section \ref{sec:SOC} by presenting a solver based on the ADMM algorithm \cite{Boyd_FTML_2011}.

\subsection{Alternating direction method of multipliers} \label{sec:solver:SADMM}

Let us consider the optimization problem
\begin{subequations} \label{eq:general:OP}
\begin{align}
    \min\limits_{z, s} \;& \frac{1}{2} z\T H z + q\T z \\
    {\rm s.t.} \;& G z = b, \; s \in \cc{S} \label{eq:general:OP:constraints} \\
           & C z + s = d, \label{eq:general:OP:relation}
\end{align}
\end{subequations}
where $z \in \R^{n}$ are the \textit{primal decision variables}, $s \in \R^{m}$ are the \textit{primal slack variables}, $H \in \Sp{n}$, $q \in \R^{n}$, $G \in \R^{n_{eq} \times n}$, $b \in \R^{n_{eq}}$, $C \in \R^{m \times n}$, $d \in \R^{m}$ and $\cc{S}$ is a Cartesian product of the form
$\cc{S} = \cc{S}_1 \times \cc{S}_2 \times \cdots \times \cc{S}_{p}$,
where $\cc{S}_i \subseteq \R^{m_i}$, $i \in \Z_1^{p}$, with $\sum_{i = 1}^{p} m_i = m$, are non-empty closed convex sets, i.e., we consider a partition of $s$ given by $s = (s_1, s_2, \dots, s_{p})$, where each $s_i \in \R^{m_i}$, $i \in \Z_1^{p}$, must belong to $\cc{S}_i$.

Problem \eqref{eq:general:OP} can be solved by considering \eqref{eq:general:OP:relation} as the linear constraint relating the two separable decision variables $z$ and $s$, and including \eqref{eq:general:OP:constraints} as indicator functions in the objective function.
That is, to consider the classical ADMM optimization problem
\begin{align*}
    \min\limits_{z, s} \;& f(z) + g(s) \\
    {\rm s.t.} \;& C z + s = d,
\end{align*}
where $f(z) = \frac{1}{2} z\T H z + q\T z + \Ind{(G, b)}(z)$, $g(s) = \Ind{\cc{S}}(s)$, and $\Ind{(G, b)}$ is the indicator functions of set $\set{z \in \R^{n}}{G z = b}$.
This leads to the Lagrangian function
\begin{equation*}
    \cc{L}(z, s, \lambda) = f(z) + g(s) + \frac{\rho}{2} \| C z + s - d + \frac{1}{\rho} \lambda \|^2,
\end{equation*}
where $\lambda \in \R^{m}$ are the dual variables for constraint \eqref{eq:general:OP:relation} and $\rho > 0$ is the penalty parameter.

Starting at an initial point $(z^0, s^0, \lambda^0)$, the iterations of ADMM consist of the following steps \cite{Boyd_FTML_2011}:
\begin{subequations}
\begin{align}
    z^{k+1} &= \arg\min\limits_{z} \cc{L}(z, s^k, \lambda^k) \label{eq:ADMM:step:z} \\
    s^{k+1} &= \arg\min\limits_{s} \cc{L}(z^{k+1}, s, \lambda^{k}) \label{eq:ADMM:step:s} \\
    \lambda^{k+1} &= \lambda^{k} + \rho (C z^{k+1} + s^{k+1} - d), \label{eq:ADMM:step:lambda}
\end{align}
\end{subequations}
where $k \geq 0$ is the iteration counter.
Step \eqref{eq:ADMM:step:z} solves
\begin{equation} \label{eq:ADMM:step:z:QP}
    z^{k+1} = \arg\min\limits_{z} \;\left\{ \frac{1}{2} z\T \hat{H} z + (\hat{q}^k)\T z, \;\;{\rm s.t.} \; G z = b \right\},
\end{equation}
where $\hat{H} = H + \rho C\T C$ and $\hat{q}^k = q + C\T \left( \rho (s^k - d) + \lambda^k \right)$.
Step \eqref{eq:ADMM:step:s} solves
\begin{equation*}
    s^{k+1} = \arg\min\limits_{s \in \cc{S}} \; \frac{\rho}{2} \| C z^{k+1} + s - d + \frac{1}{\rho} \lambda^k \|^2,
\end{equation*}
which is the Euclidean projection of $(-C z^{k+1} + d - \frac{1}{\rho} \lambda^k)$ onto $\cc{S}$, that considering its separability, can be solved for each subset $s_i^{k+1}$, $i \in \Z_1^p$.

\subsection{Particularization to the HMPC's optimization problem} \label{sec:solver:HMPC:poly}

Problem \eqref{eq:HMPC} can be recast as \eqref{eq:general:OP} by taking $s = (s_b, s_c)$
\begin{equation*}
    z = (u^0, x^1, u^1, \dots, x^{N-1}, u^{N-1}, \xe, \xs, \xc, \ue, \us, \uc), \\
\end{equation*}
where $s_b = (s_{b, 0}, s_{b, 1}, \dots s_{b, N-1})$ with $s_{b, i} \in \R^{n_y}$, $i \in \Z_0^{N-1}$, and $s_c = (s_{c, 1}, s_{c, 2}, \dots, s_{c, n_y})$ with $s_{c, i} \in \R^3$, $i \in \Z_1^{n_y}$.
That is, $s_{b, i}$ accounts for the constraints \eqref{eq:HMPC:constraints} and $s_{c, i}$ for the constraints \eqref{eq:HMPC:D}.
Therefore, set $\cc{S}$ in \eqref{eq:general:OP:constraints} is given by
\begin{equation*}
    \cc{S} = \underbrace{\cc{C} \times \cc{C} \times \cdots \times \cc{C}}_{N} \times \cc{D}_1 \times \cc{D}_2 \times \cdots \times \cc{D}_{n_y},
\end{equation*}
where $\cD_i \doteq \cD(\yUBj, \yLBj) \subset \R^3$, $i \in \Z_1^{n_y}$, as defined in \eqref{eq:D}, and $\cc{C} \doteq \set{y \in \R^{n_y}}{\yLB \leq y \leq \yUB}$.
Ingredients $G$ and $b$ account for the equality constraints \eqref{eq:HMPC:cond:inic}, \eqref{eq:HMPC:dynamics}, \eqref{eq:HMPC:xN}, \eqref{eq:HMPC:xe}, \eqref{eq:HMPC:xs} and \eqref{eq:HMPC:xc}; $C$ and $d$ for the relationship between $z$ and $s$; and $H$ and $q$ for the cost function \eqref{eq:HMPC:cost}.

\begin{remark}
We note that $x^0$ does not appear in the above definition of $z$ because equation \eqref{eq:HMPC:cond:inic} can be implicitly considered in the optimization problem, i.e., taking the first equation of \eqref{eq:HMPC:dynamics} as $x^1 = A x(t) + B u^0$, as is typical in MPC solvers.
\end{remark}

By defining $z$ and, in particular, $s$ in this way, we have that the update of $s^{k+1}$ requires two types of projections: the first is a projection onto $\cc{C}$, which is a simple projection onto a box, and the second is a projection onto $\cD_i$ for each $i \in \Z_1^{n_y}$, whose explicit solution is provided in Theorem \ref{theo:proj:D}.

Finally, we must find a way of solving the equality constrained QP \eqref{eq:ADMM:step:z:QP}, corresponding to step \eqref{eq:ADMM:step:z} of the ADMM algorithm.
There are multiple ways to do this \cite{Benzi_2005}.
A popular approach in sparse QP solvers is the one presented in \cite[\S 3.1]{Stellato_OSQP}, in which its KKT conditions are expressed as a linear system of equations whose solution can be sparsely computed using matrix decompositions such as the QR \cite{Saraf_TAC_2019} or LDL$\T$ factorizations \cite[\S 2]{Garstka_JOTA_2021}, \cite[\S 3.1]{Stellato_OSQP}.
Other approaches make use of the Cholesky decomposition \cite{Krupa_TCST_20}, leading to very sparse and simple matrices thanks to the simple structure of $G$ in linear MPC.
However, in the case of HMPC, we find that the straightforward explicit solution from \cite[\S 10.1.1]{Boyd_ConvexOptimization} provides the best computational results.
After some simple algebraic manipulations, this approach leads to
\begin{equation} \label{eq:SADMM:solve:system:dense}
    z^{k+1} =  M_q \hat{q}^k + M_b b,
\end{equation}
where $M_q$ and $M_b$ are the matrices given by
\begin{align*}
    M_q &= \hat{H}^{-1} G\T (G \hat{H}^{-1} G\T )^{-1} G \hat{H}^{-1} - \hat{H}^{-1}, \\
    M_b &= \hat{H}^{-1} G\T (G \hat{H}^{-1} G\T )^{-1}.
\end{align*}
Although matrices $M_q$ and $M_b$ are generally dense, we find this approach is often favorable compared to sparse approaches due to the fact that HMPC is of particular interest when using a small prediction horizon, leading to relatively small-dimensional matrices, where the advantages of the sparse approaches are no longer meaningful.
We also note that only the first $n_x$ columns of $M_b$ are required, since all the elements of $b$ are always zero with the exception of the first $n_x$.

\begin{algorithm}[t]
    \DontPrintSemicolon
    \caption{ADMM for solving HMPC} \label{alg:ADMM:HMPC}
    \Require{$x(k) \in \R^{n_x}$, $(x_r, u_r) \in \R^{n_x} \times \R^{n_u}$, $z^0 \in \R^{n}$, $s^0 \in \R^{m}$, $\lambda^0 \in \R^{m}$, $\epsilon_p > 0$, $\epsilon_d > 0$}
    $k \gets 0$\;
    Update $q$ with $x_r$ and $u_r$ and $b$ with $x(t)$\;
    \Repeat{$ \| c \|_\infty \leq \epsilon_p $ \Kwand $\| s^k - s^{k-1} \|_\infty \leq \epsilon_d$ \label{alg:ADMM:HMPC:exit}}{ 
        $\hat{q}^k \gets q + C\T \left( \rho (s^k - d) + \lambda^k \right)$ \label{alg:ADMM:HMPC:q_hat}\;
        $z^{k+1} \gets$ Solution of \eqref{eq:ADMM:step:z:QP} for $\hat{q}^k$ \label{alg:ADMM:HMPC:z}\;
        $c \gets C z^{k+1} - d$ \label{alg:ADMM:HMPC:c1}\;
        \For{$i \in \Z_0^{N-1}$}{
            $s_{b, i}^{k+1} \gets \max ( \min ( -c_{b, i} - \frac{1}{\rho} \lambda_{b, i}^{k},\; \yUB),\; \yLB)$ \label{alg:ADMM:HMPC:box}\;
        }
        \For{$i \in \Z_1^{n_y}$}{
            $s_{c, i}^{k+1} \gets \cP_{\cKl(\yUBj)} \left( \cP_{\cKu(\yLBj)} ( -c_{c, i} - \frac{1}{\rho} \lambda_{c, i}^{k}) \right)$ \label{alg:ADMM:HMPC:SOC}\;
        }
        $c \gets c + s^{k+1}$ \label{alg:ADMM:HMPC:c2}\;
        $\lambda^{k+1} \gets \lambda^{k} + \rho c$\;
        $k \gets k + 1$\;
    }
    \KwOut{$\tilde{z} \gets z^{k}$, $\tilde{s} \gets s^k$}
\end{algorithm}

Algorithm \ref{alg:ADMM:HMPC} shows the particularization of ADMM to the HMPC formulation \eqref{eq:HMPC}, where we take the same partition of vectors $c$ and $\lambda$ that we took for $s$ (see the beginning of this subsection) in steps \ref{alg:ADMM:HMPC:box} and \ref{alg:ADMM:HMPC:SOC}, i.e., $c = (c_b, c_c)$ and $\lambda = (\lambda_b, \lambda_c)$.
We note that step \ref{alg:ADMM:HMPC:SOC} is making use of Theorem~\ref{theo:proj:K}.
The algorithm returns a suboptimal solution $(\tilde{z}, \tilde{s})$, where the level of suboptimality is determined by the choice of the exit tolerances $\epsilon_p > 0$ and $\epsilon_d > 0$ \cite[\S 3.3]{Boyd_FTML_2011}.

Vector $c$ is used to store the values of $C z^{k+1} - d$ and $C z^{k+1} + s - d$.
We use it to reduce to one the number of times the operation $C z^{k+1}$ is performed in each iteration of the algorithm.
Even though $C$ is sparse and the multiplication operations in steps \ref{alg:ADMM:HMPC:q_hat} and \ref{alg:ADMM:HMPC:c1} in which it is involved are performed sparsely by storing the matrix using the \textit{compressed sparse row} representation, this reduction can have a significant impact on the computation time of the algorithm, especially if $E$ and $F$ are dense and/or $n_y$ is large.

\begin{remark}
The key point of Algorithm \ref{alg:ADMM:HMPC} is that we are able to project directly onto the sets $\cD(\yUBj, \yLBj)$ thanks to the explicit solution provided by Theorem \ref{theo:proj:D}.
Otherwise, we would have had to consider the sets $\cKu(\yLBj)$ and $\cKl(\yUBj)$ separately for each $i \in \Z_1^{n_y}$, which would have increased the dimension of $s$ and therefore the computational complexity of the solver.
\end{remark}

\section{Numerical results} \label{sec:results}

This section shows two case studies evaluating the performance of the proposed HMPC solver.
The first one compares it against other solvers and MPC formulations from the literature.
The second one highlights the benefits obtained by considering the sets $\cD$ instead of SOC constraints.

\vspace*{-0.2em}
\subsection{Application of HMPC to a ball and plate system}

In this section we solve the problem from the case study of the original HMPC article \cite{Krupa_TAC_2022} using the results obtained in the previous sections as well as with the SCS solver \cite{ODonoghue_SCS_21}.
We also apply state-of-the-art solvers for the MPC for tracking (MPCT) formulation \cite{Ferramosca_A_2009} and for the standard MPC formulation with a terminal equality constraint (equMPC) described in \cite[Eq. (9)]{Krupa_TCST_20}, which are the MPC formulations used to highlight the benefits of HMPC in \cite{Krupa_TAC_2022} and \cite{Krupa_CDC_19}.

The system under consideration, described in detail in \cite[\S V.A]{Krupa_TAC_2022}, is a ball and plate system where the control objective is to steer the position of the ball to a given point by acting on the tilt of the plate through two motors on its main axes whose acceleration we can manipulate.

We maintain the same parameters, constraints and setup from \cite[\S V]{Krupa_TAC_2022}, including the prediction horizon $N = 5$ and base frequency $w = 0.3254$ (which is selected according to the criteria presented in \cite[\S VI]{Krupa_TAC_2022}) for the HMPC formulation, with two exceptions.
First, to improve the numerical conditioning of the resulting optimization problems, we scale the states corresponding to the position of the ball on each axis by a factor of $0.1$.
Second, due to the previous change, we reduced $T_h$ and $S_h$ by a factor of $10$ to maintain nearly indistinguishable closed-loop trajectories from the ones presented in \cite[\S V]{Krupa_TAC_2022}.
This resulted in an improvement on the number of iterations of the solvers by up to two orders of magnitude while maintaining the original results.

\begin{figure*}[t]
    \centering
    \begin{subfigure}[ht]{0.43\textwidth}
        \includegraphics[width=0.99\linewidth]{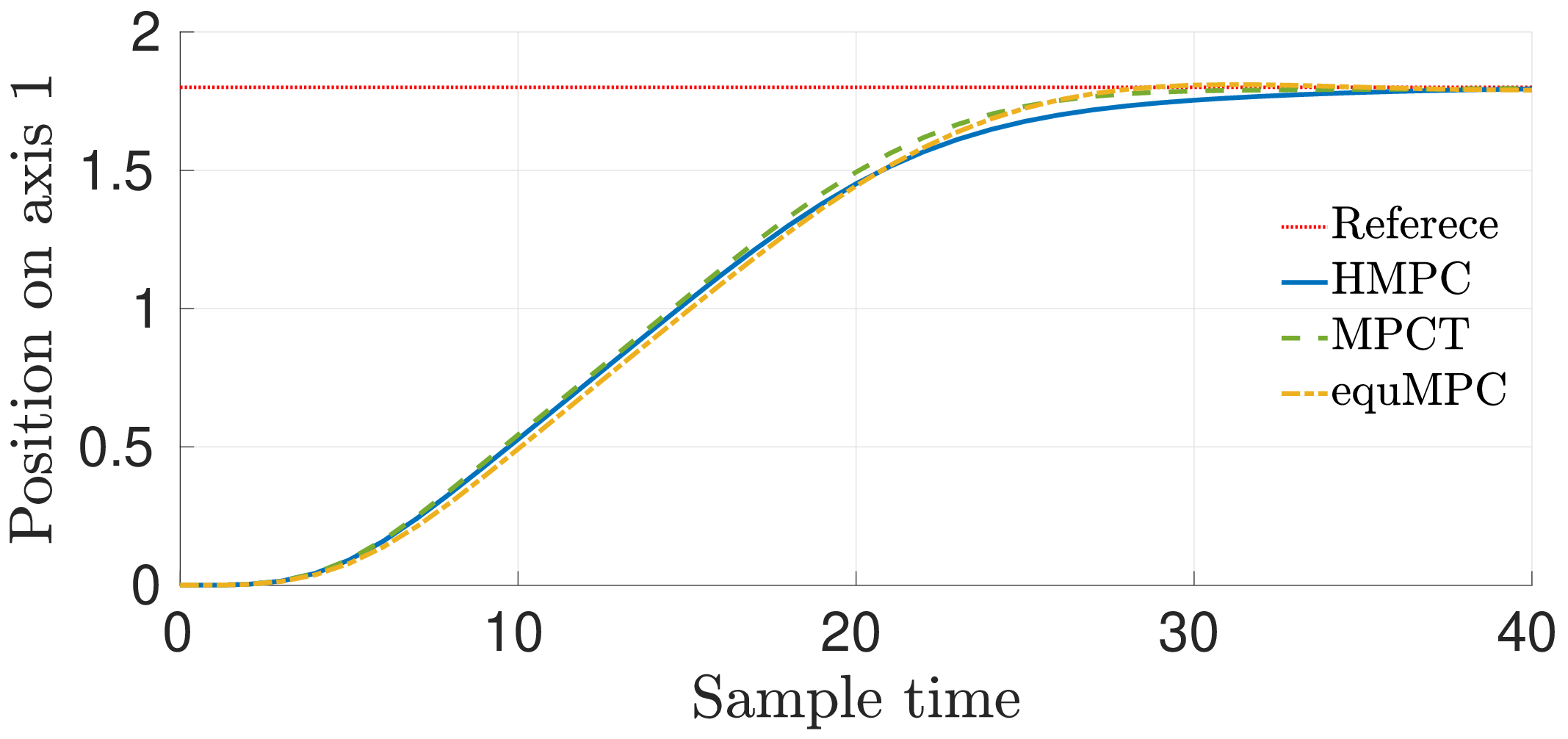}
        \caption{Position of ball on axis 1.}
        \label{fig:result:state}
    \end{subfigure}%
    \hfill
    \begin{subfigure}[ht]{0.43\textwidth}
        \includegraphics[width=0.99\linewidth]{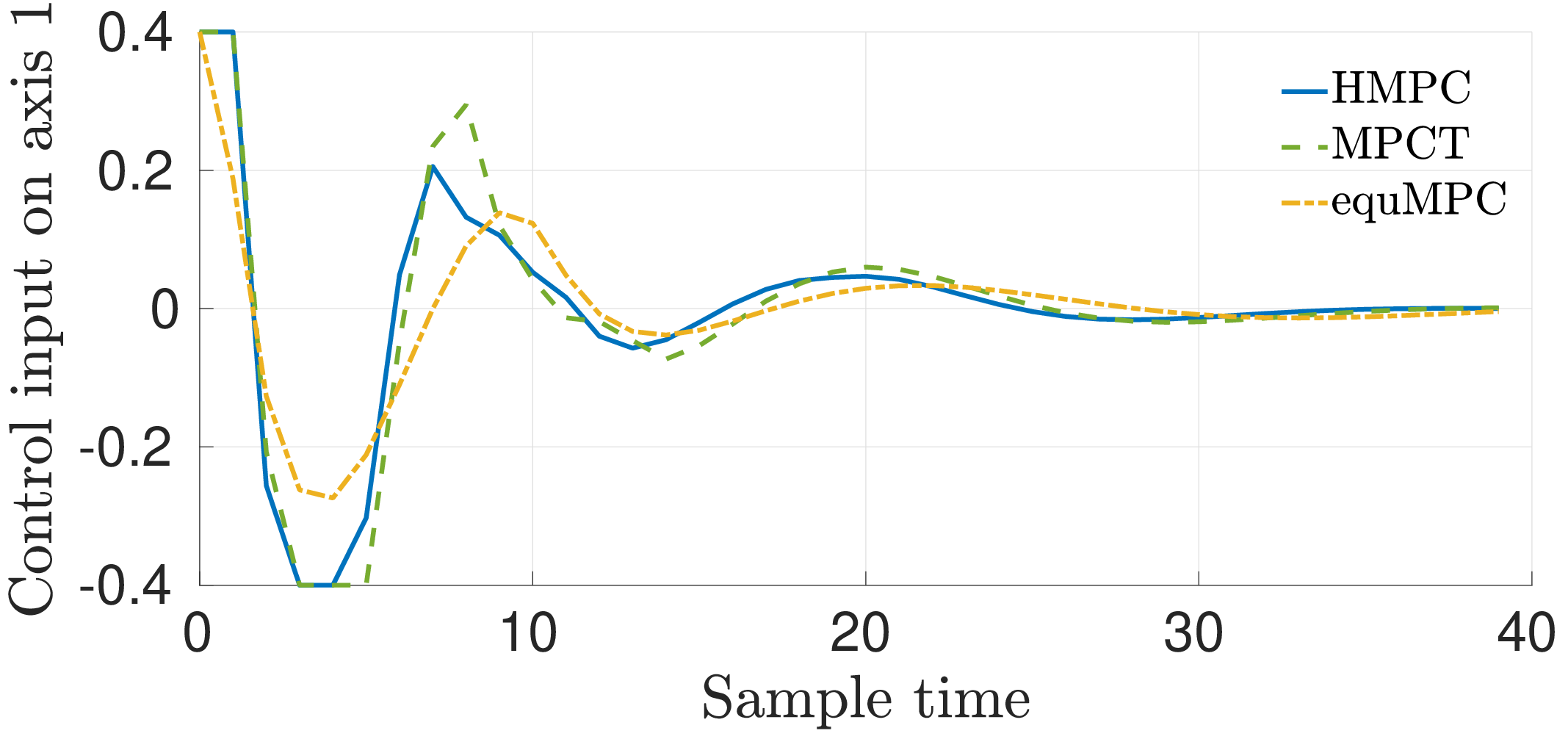}
        \caption{Control input for axis 1.}
        \label{fig:result:input}
    \end{subfigure}%
    \hfill

    \begin{subfigure}[ht]{0.43\textwidth}
        \includegraphics[width=0.99\linewidth]{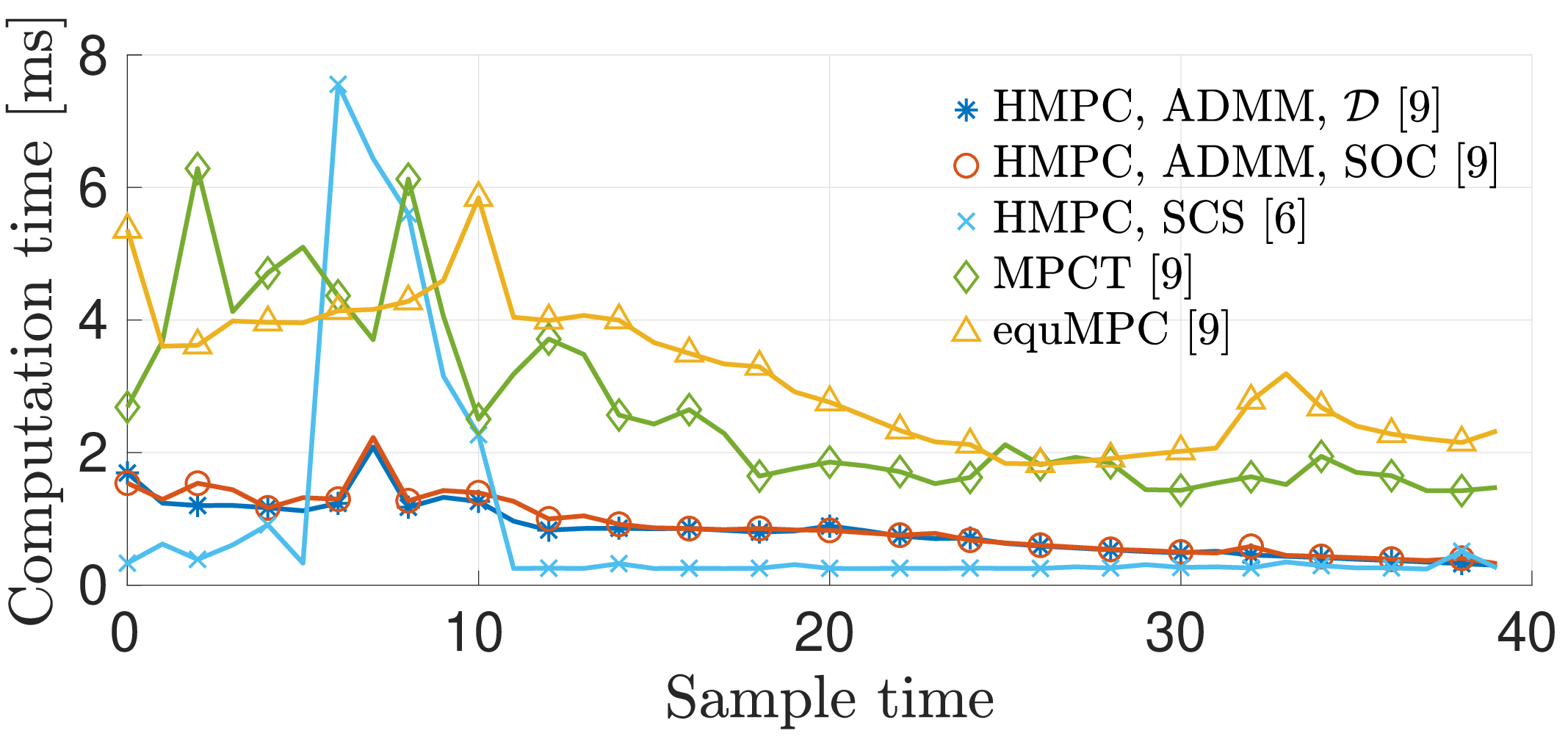}
        \caption{Computation times.}
        \label{fig:result:time}
    \end{subfigure}%
    \hfill
    \begin{subfigure}[ht]{0.43\textwidth}
        \includegraphics[width=0.99\linewidth]{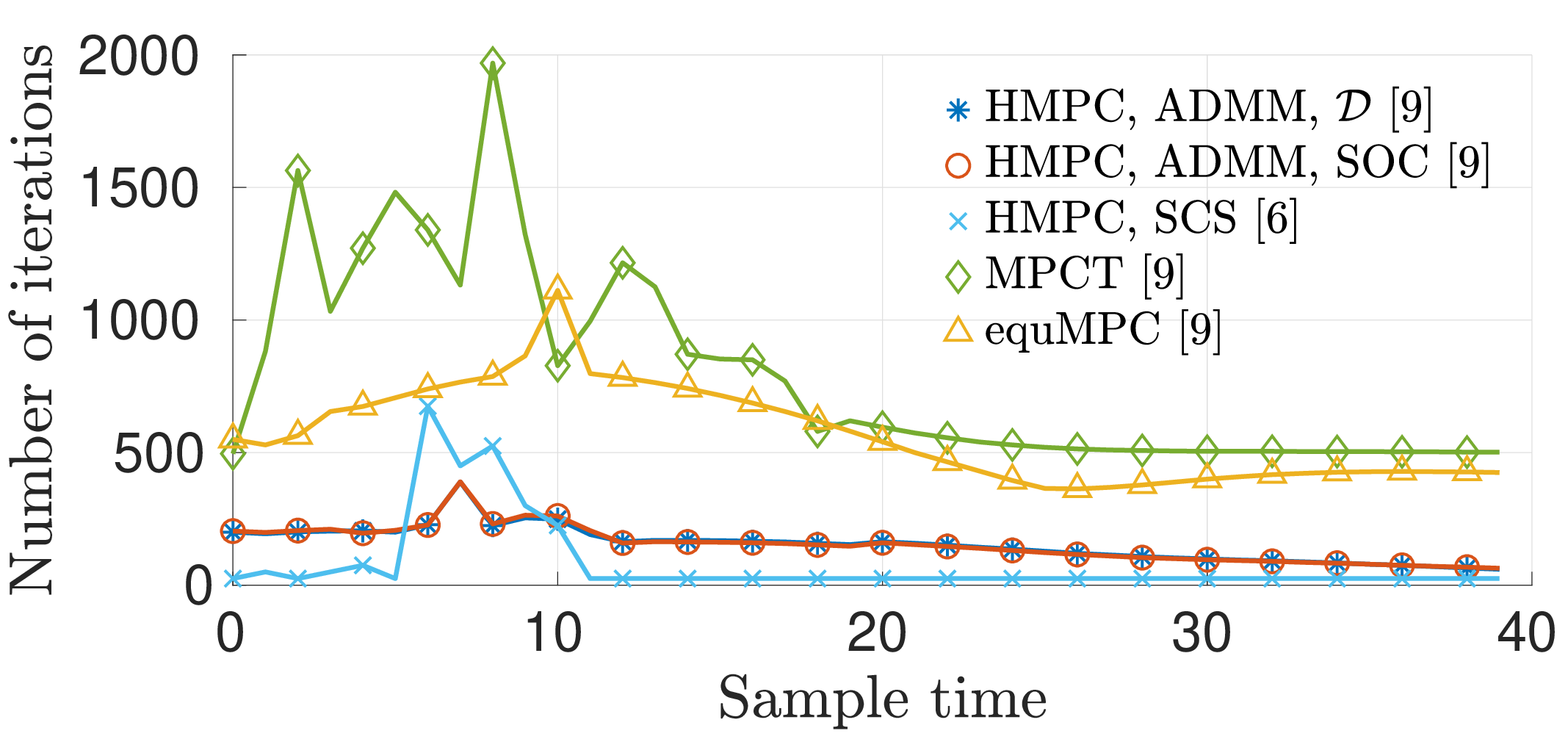}
        \caption{Number of iterations.}
        \label{fig:result:iter}
    \end{subfigure}%
    \caption{Closed-loop results on the ball and plate system.}
    \label{fig:result}
\end{figure*}

{\renewcommand{\arraystretch}{1.0}%
    \begin{table*}[t]
    \setlength{\tabcolsep}{3.5pt}
    \centering
	\begin{threeparttable}
    \begin{tabular}{rcccccccccc}
        & \multicolumn{4}{c}{Computation time [ms]} & \multicolumn{4}{c}{Number of iterations} & \\
        \cmidrule(lr){2-5}\cmidrule(lr){6-9}\cmidrule(lr){10-10}\cmidrule(lr){11-11}
        & Average & Median & Maximum & Minimum & Average & Median & Maximum & Minimum & Avrg. [$\mu$s/iters.] & Penalty parameters \\
        \cmidrule(lr){2-5}\cmidrule(lr){6-9}\cmidrule(lr){10-10}\cmidrule(lr){11-11}
        HMPC, ADMM, $\cD$   & 0.83 & 0.82 & 2.09 & 0.30 & 154.6 & 158.0 & 389  & 60  & 5.36 & $\rho = 15$ \\
        HMPC, ADMM, SOC     & 0.87 & 0.83 & 2.23 & 0.33 & 153.8 & 152.5 & 390  & 64  & 5.77 & $\rho = 15$ \\
        HMPC, SCS           & 0.91 & 0.26 & 7.56 & 0.24 & 78.8  & 25.0  & 675  & 25  & 11.5 & - \\
        MPCT                & 2.61 & 1.94 & 6.29 & 1.43 & 789.8 & 577.0 & 1969 & 496 & 3.31 & $\rho_1 = 3000$, $\rho_2 = 100$ \\
        equMPC              & 3.14 & 3.05 & 5.84 & 1.83 & 567.5 & 535.0 & 1112 & 363 & 5.54 & $\rho = 45$ \\
        \cmidrule(lr){2-11}
    \end{tabular}
	\begin{tablenotes}[] \footnotesize
    \item \hspace{9em}Penalty parameter $\rho_1$ is for the constraints listed in \cite[Remark 4]{Krupa_TCST_21}, and $\rho_2$ for the rest.
	\end{tablenotes}
    \caption{Comparison between the different solvers during the simulation shown in Figure \ref{fig:result}.}
    \label{tab:result}
	\end{threeparttable}
\end{table*}}

We show results using the following solvers (formulations):
\begin{itemize}[leftmargin=*]
    \setlength\itemsep{0.0em}
\item SCS \texttt{v3.0.0} (HMPC) \cite{ODonoghue_SCS_21}. This state-of-the-art operator splitting solver can be applied to HMPC by imposing constraints \eqref{eq:HMPC:D} using SOC constraints. In particular, two SOC constraints are required for each constraints in \eqref{eq:HMPC:D}.
\item SPCIES \texttt{v0.3.7} (HMPC) \cite{SPCIES}. This solver implements the ADMM algorithm described in Section \ref{sec:solver}.
    We also show the results of a version in which we do not make use of the results presented in Section \ref{sec:SOC}, i.e., in which we impose constraints \eqref{eq:HMPC:D} using SOC constraints as in the SCS solver.
    Step \ref{alg:ADMM:HMPC:z} is solved using the dense method \eqref{eq:SADMM:solve:system:dense}, since it provided better computational results.
\item SPCIES \texttt{v0.3.7} (MPCT and equMPC) \cite{SPCIES}. The MPCT formulation is solved using the extended ADMM algorithm presented in \cite{Krupa_TCST_21}. The equMPC formulation is solved using the ADMM algorithm presented in \cite{Krupa_TCST_20} (see \cite[\S 5.4.2]{Krupa_Thesis_21} for a more in-depth explanation).
    The MPCT formulation uses the same parameters as the ones from \cite[\S V]{Krupa_TAC_2022}, including the prediction horizon $N = 15$, which is the smallest one for which the closed-loop performance of the system is similar to the one obtained using HMPC with $N = 5$.
    The prediction horizon of the equMPC formulation is taken as $N = 30$ for this very same reason.
    We take its $Q$ and $R$ ingredients as the ones used by the other two formulations.
\end{itemize}

The options of the solvers where left at their default values except for the scaling option of the SCS solver, which was set to $1$ since it significantly improved its performance, and the exit tolerances, which were set to $10^{-5}$.

Figure \ref{fig:result} shows the closed-loop results of the linear system with the three MPC formulations.
We refer the reader to \cite[\S V.B]{Krupa_TAC_2022} for some results on the application of HMPC to the nonlinear system.
Figure \ref{fig:result:state} shows the position of the ball on axis $1$, which converges to its reference $1.8$, and Figure \ref{fig:result:input} shows the control action on axis $1$, whose reference is $0$.
Figures \ref{fig:result:time} and \ref{fig:result:iter} show the computation times and number of iterations of each solver, with Table \ref{tab:result} showing relevant information about them.
It also shows the penalty parameters used for the solvers from the SPCIES toolbox.
The tests are performed on an Intel Core i5-8250U operating at $1.60$GHz in Matlab using the C-MEX interface of the solvers.

\subsection{Benefits of considering the sets $\cD$}

The results from the previous subsection seem to indicate that there is little benefit to using the sets $\cD$ over the alternative of the SOC constraints.
However, this is due to the small dimension and sparsity of matrices $E$ and $F$ in the previous example, which are simply imposing box constraints on four of the states and the two control inputs.

\begin{figure}[t]
    \centering
    \includegraphics[width=0.8\linewidth]{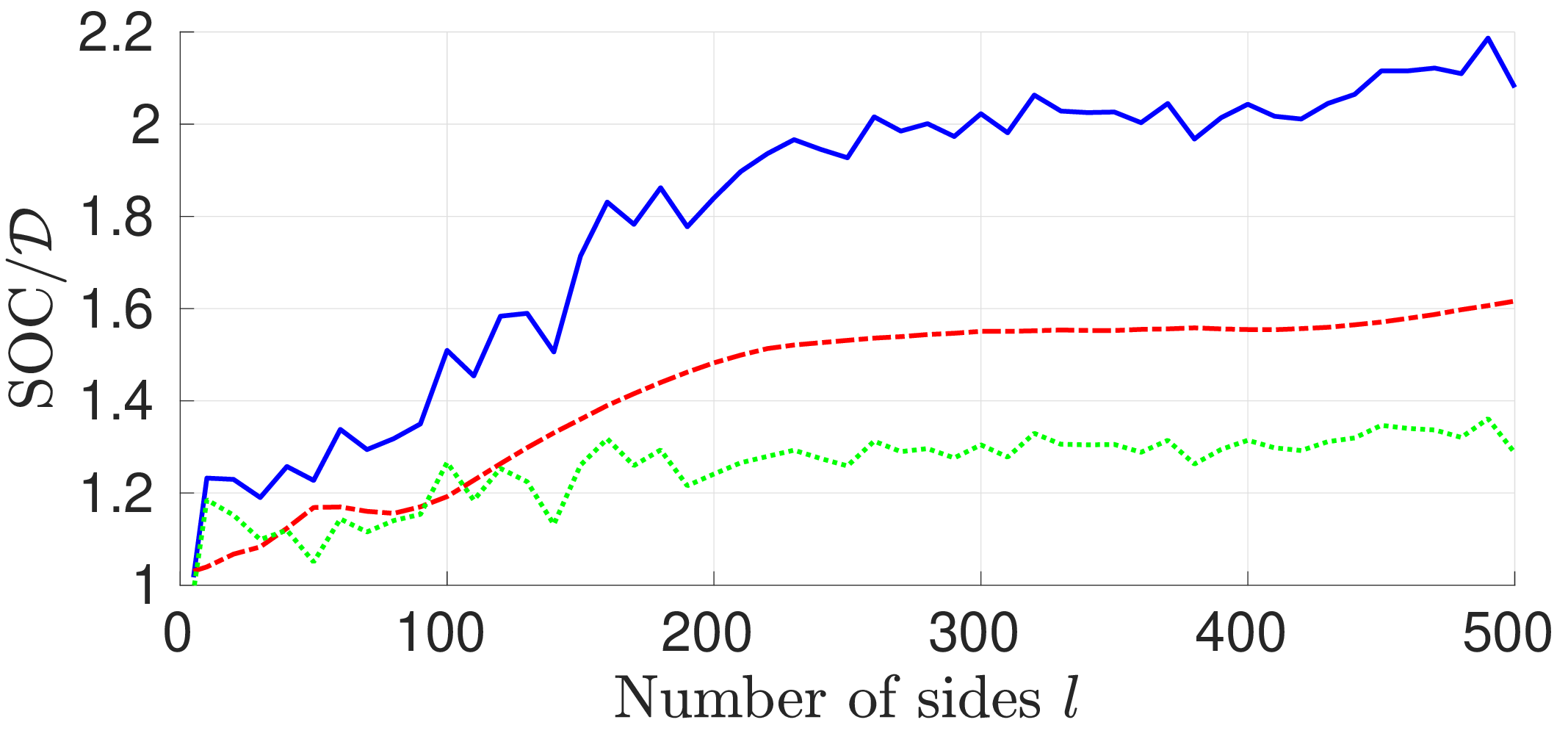}
    \caption{Comparison of using the $\cD$ or SOC constraints. Lines show averages when using the SOC constraints divided by averages when using the $\cD$ constraints for increasing values of $l$. Solid blue line represents the total computation time, dashed red the number of iterations and dotted green the computation time per iteration.}
    \label{fig:D:vs:SOC}
\end{figure}

However, for larger dimensions of $E$ and $F$ the advantage of using the sets $\cD$ becomes more pronounced.
To show this, we take the exact same setup as in the previous subsection, but we include constraints on the position of the ball on the plate in the form of a regular $l$-sided polygon centered at the origin whose vertices are at a distance of $2$ decimeters from the origin.
Note that each side of the polygon adds an additional row to the matrices $E$ and $F$ containing two non-zero elements, thus increasing $n_y$ by one.

Figure \ref{fig:D:vs:SOC} shows a comparison between the ADMM algorithm using the $\cD$ constraints and the SOC constraints for increasing values of $l$ starting from $l = 5$.
As can be seen, the average total computation time using the SOC constraints becomes over twice as long as using the $\cD$ constraints when $n_y$ becomes sufficiently large.

There are two factors at play that produce these results.
On one hand, increasing $n_y$ increases the dimension of $s$, and thus $C$.
When using the $\cD$ constraints this dimension is given by $N n_y + 3 n_y$, whereas using the SOC constraints the dimension is given by $N n_y + 6 n_y$.
For small values of $N$, where the HMPC formulation excels, this difference can become significant, becoming more prominent the larger the value of $n_y$ and the larger the number of non-zero elements in $E$ and $F$, since the cost of the multiplications by $C$ becomes the main computational burden of the algorithm in this scenario.
This reduction of the computational cost per iteration leads to the result presented by the dotted green line of Figure \ref{fig:D:vs:SOC}.

\begin{remark}
We note that the above discussion applies when the multiplications involving $C$ are performed without taking advantage of its structure, since if sets $\cD$ are not used, then the rows of $C$ related to the SOC constraints are (nearly) duplicated.
This can be used to reduce the computational burden of performing the operation $C z^{k+1}$ in Step \ref{alg:ADMM:HMPC:c1} of Algorithm \ref{alg:ADMM:HMPC}.
However, the same cannot be done in Step \ref{alg:ADMM:HMPC:q_hat}.
Thus, our approach still provides a computational reduction even if the particular structure of $C$ is exploited.
\end{remark}

The second factor that leads to an increase of the total computation time is the fact that the algorithm requires more iterations to satisfy the exit condition for larger values of $s$.
This is due to the fact that there are more primal and dual variables which must converge to a vicinity of their optimal values.
Thus, by using the sets $\cD$ we reduce these variables and thus the expected number of iterations of the algorithm.
This effect can be seen in the dashed red line of Figure \ref{fig:D:vs:SOC}.
Additionally, the use of sets $\cD$ may also lead to a reduction of the number of active constraints, which typically also reduces the number of ADMM iterations.

\begin{remark}
The use of sets $\cD$ is motivated by our consideration of the system constraints \eqref{eq:constraints}.
In systems that do not have upper and lower bounds, but instead only have one of the two, the proposed approach is meaningless, since this case would not result in pairs of opposed SOC constraints.
We note, however, that \eqref{eq:constraints} is a very common constraint in MPC and that our approach also applies to the case of box constraints on states and inputs.
\end{remark}

\section{Discussion and conclusions} \label{sec:conclusions}

This paper discusses how to efficiently solve the HMPC formulation by imposing its SOC-like constraints by using the sets $\cD$ defined in \eqref{eq:D}.
The main conclusion is that the HMPC formulation is suitable candidate for its implementation in embedded systems, since we obtain computational results that are in line (or even better) than the ones obtained for other linear MPC formulations.
In particular, we derive a few interesting conclusions from the numerical results:
\begin{itemize}[leftmargin=*]
    \setlength\itemsep{0.0em}
    \item A comparison between the solvers for HMPC and the ones for MPCT and equMPC shows that a solution of HMPC can be obtained in computation times comparable to the ones for MPC formulations whose optimization problem is a QP using state-of-the-art solvers when comparing prediction horizons for which the closed-loop performances are similar.
    \item A comparison between the ADMM solvers using the $\cD$ or SOC constraints shows that the use of the sets $\cD$ can have a significant impact on the computation times, especially for dense matrices $E$ and $F$ and sufficiently large values of $n_y$.
    For large prediction horizons, the reduction obtained on matrix $C$ when using sets $\cD$ may not be so noticeable, but in the case of HMPC, which is designed to be used with small prediction horizons, it can have a noticeable impact.
\item The results using the SCS solver indicate that lower computation times may be obtained if additional aspects, such as numerical preconditioning, adaptation of the penalty parameter, etc., were to be included in the algorithm.
\end{itemize}
We also note that the results of this paper could be very useful in other optimization settings involving constraints $\cD$.
An interesting future research line is to extend the solver to be able to deal with the complications that arise in a practical setting due to linearization errors or external disturbances.
In particular, the extension of HMPC to the robust case may be a viable and interesting topic for further future research.

\begin{appendix}

\subsection{Proof of Theorem \ref{theo:proj:K}} \label{app:proof:proj:K}

This proof makes use of the following well known \textit{projection theorem} (see \cite[Prop. 1.1.9]{Bertsekas_Convex_2009}).

\begin{theorem}[Projection Theorem] \label{theorem:projection}
Let $\cc{C}$ be a nonempty closed convex subset of $\R^n$, and let $z$ be a vector in $\R^n$. There exists a unique vector that minimizes $\| v - z \|$ over $v \in \cc{C}$, called the projection of $z$ on $\cc{C}$. Furthermore, a vector $v^* \in \cc{C}$ is the projection of $z$ on $\cc{C}$ if and only if $\langle v - v^*, z - v^* \rangle \leq 0, \; \forall v \in \cc{C}$.
\end{theorem}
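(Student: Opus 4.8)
The plan is to establish the three assertions of the Projection Theorem in turn: existence of a minimizer, its uniqueness, and the variational characterization $\sp{v - v^*}{z - v^*} \leq 0$ for all $v \in \cc{C}$. Existence and uniqueness will follow from elementary compactness and convexity arguments, while the characterization is obtained by differentiating the squared distance along line segments that, by convexity, stay inside $\cc{C}$.

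For existence I would first observe that $\cc{C}$ need not be bounded, so Weierstrass' theorem does not apply directly; the remedy is to compactify. Fix any $w \in \cc{C}$ (possible since $\cc{C}$ is nonempty) and set $r = \|w - z\|$. The sublevel set $\cc{C} \cap \{ v \in \R^n : \|v - z\| \leq r \}$ is closed, being an intersection of closed sets, and bounded, hence compact, and the continuous map $v \mapsto \|v - z\|$ attains a minimum on it; this value is plainly also the infimum over all of $\cc{C}$, since points outside the ball have distance exceeding $r \geq$ the attained value. For uniqueness, suppose $v_1^*$ and $v_2^*$ both realize the minimal distance $d$. Their midpoint $m = (v_1^* + v_2^*)/2$ lies in $\cc{C}$ by convexity, and the parallelogram law gives $\|m - z\|^2 = d^2 - \frac{1}{4}\|v_1^* - v_2^*\|^2$. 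Since $m \in \cc{C}$ forces $\|m - z\|^2 \geq d^2$, we conclude $v_1^* = v_2^*$.

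For the characterization, the sufficiency direction is a direct expansion: for an arbitrary $v \in \cc{C}$,
\begin{equation*}
    \|v - z\|^2 = \|v - v^*\|^2 - 2\sp{v - v^*}{z - v^*} + \|v^* - z\|^2 \geq \|v^* - z\|^2,
\end{equation*}
where the squared-norm term is nonnegative and the inner-product term is nonpositive by hypothesis, so $v^*$ minimizes. For necessity, given that $v^*$ minimizes, I would exploit convexity: for any $v \in \cc{C}$ and $t \in [0,1]$ the point $v^* + t(v - v^*)$ belongs to $\cc{C}$, so the quadratic $\phi(t) \doteq \|v^* + t(v - v^*) - z\|^2$ has a minimum at $t = 0$ over $[0,1]$. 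Expanding $\phi(t) = \|v^* - z\|^2 + 2t\,\sp{v^* - z}{v - v^*} + t^2\|v - v^*\|^2$ and imposing $\phi'(0) \geq 0$ yields $\sp{v^* - z}{v - v^*} \geq 0$, which is exactly the claimed inequality after using symmetry and a sign change.

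The main obstacle here is conceptual rather than computational: it is the possible unboundedness of $\cc{C}$ in the existence step, which must be handled by the sublevel-set compactification rather than a naive appeal to Weierstrass. Everything else reduces to two short identities — the parallelogram law and the expansion of a one-variable quadratic — and requires no delicate estimates.
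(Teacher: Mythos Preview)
Your proof is correct and follows the standard textbook argument. Note, however, that the paper does not actually prove this statement: Theorem~\ref{theorem:projection} is quoted there as a well-known result with a reference to \cite[Prop.~1.1.9]{Bertsekas_Convex_2009}, and no proof is given. What you have written is essentially the proof one finds in Bertsekas or any convex-analysis text---compactify via a sublevel set for existence, parallelogram law for uniqueness, and the one-variable quadratic $\phi(t)$ along the segment for the variational inequality---so there is nothing to compare against and no gap to flag.
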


\begin{figure*}[!b]
\vspace*{-0.4em}
\rule[1ex]{\textwidth}{0.1pt}
\vspace*{-0.4em}
\small
\begin{equation*}
\vspace*{-0.4em}
\begin{aligned} \label{eq:proof:proj:K:final}
    \langle (y_0,\; y_1) - (\hat z_0 + c, \hat z_1), (z_0,\; z_1) - (\hat z_0 + c,\; \hat z_1) \rangle &= \langle (y_0 - \hat z_0 - c,\; y_1 - \alpha \hat z_0 \bz), (z_0 - \hat z_0 - c,\; (\|z_1\| - \alpha \hat z_0) \bz) \rangle \\
            &= \langle y_1 - \alpha \hat z_0 \bz,\; (\|z_1\| - \alpha \hat z_0) \bz \rangle + (y_0 - \hat z_0 - c)(z_0 - \hat z_0 - c) \\
            &= (\|z_1\| - \alpha \hat z_0) \langle y_1,\; \bz \rangle + \alpha \hat z_0 (\alpha \hat z_0 - \|z_1\|) \langle \bz,\; \bz \rangle + (y_0 - \hat z_0 - c)(z_0 - \hat z_0 - c) \\
            &\becauseof[\leq]{(*)} (\|z_1\| - \alpha \hat z_0) \|y_1\| \| \bz \| + \alpha \hat z_0 (\alpha \hat z_0 - \|z_1\|) \| \bz \|^2 + (y_0 - \hat z_0 - c)(z_0 - \hat z_0 - c) \\
            &\becauseof[\leq]{(*)} (\|z_1\| - \alpha \hat z_0) \alpha (y_0 - c) + \alpha \hat z_0 (\alpha \hat z_0 - \|z_1\|) + (y_0 - \hat z_0 - c)(z_0 - \hat z_0 - c) \\
            &= \alpha(\|z_1\| - \alpha \hat z_0) (y_0 - \hat z_0 - c) + (y_0 - \hat z_0 - c)(z_0 - \hat z_0 - c) \\
            &= (y_0 - \hat z_0 - c) \left( \alpha(\|z_1\| - \alpha \hat z_0) + z_0 - \hat z_0 - c \right) \\
            &= (y_0 - \hat z_0 - c) ( z_0 - c + \alpha \| z_1 \| - 2 \hat{z}_0 ) = (y_0 - \hat z_0 - c) (2 \alpha \tau - 2 \alpha \tau) = 0
\end{aligned}
\end{equation*}
\vspace*{-1.9em}
\normalsize
\end{figure*}

The first case is obvious: if $z = (z_0, z_1) \in \cK$, then $z = \cP_\cK(z)$.
To prove the second case, i.e., $\|z_1\| \leq -\alpha(z_0 - c)$, we note that $(c, 0) \in \cK$.
We now use Theorem \ref{theorem:projection} to prove that $(c, 0) = \cP_{\cK}(z)$. We have that, for any $y = (y_0, y_1) \in \cK$,
\small
\begin{equation*}
    \begin{aligned}
        &\langle (y_0, y_1) - (c, 0), (z_0, z_1) - (c, 0) \rangle\\
        & = \langle ( y_0 - c, y_1), (z_0 - c, z_1) \rangle = \langle y_1, z_1 \rangle + (y_0 - c)(z_0 - c)\\
        & \becauseof[\leq]{(*)} \|y_1\| \|z_1\| + (y_0 - c)(z_0 - c) \\
        & \moveEq{-4}\becauseof[\leq]{(**)} -\alpha^2(y_0 - c)(z_0 - c) + (y_0 - c)(z_0 - c) = 0, 
    \end{aligned}
\end{equation*}
\normalsize
where $(*)$ is due to the Cauchy-Schwarz inequality and $(**)$ holds because $\| z_1 || \leq -\alpha (z_0 - c)$ and $\| y_1 \| \leq \alpha (y_0 - c)$.
Next, we prove the third case.
Let us introduce the notation
\begin{equation*}
\hat z_0 \doteq \alpha \tau, \quad \bz \doteq \fracg{z_1}{\|z_1\|}, \quad \hat z_1 \doteq \alpha \hat z_0 \bz,
\end{equation*}
where we recall that $\tau = 0.5 (\alpha(z_0 - c) + \| z_1 \|)$ and note that $\bz$ is well defined because $\| z_1 \| \not = 0$ due to the non-satisfaction of the conditions in \eqref{eq:proj:K:1} and \eqref{eq:proj:K:2}.
We start by proving that $(\hat z_0 + c, \hat z_1) \in \cK$ when \textit{(i)} $\|z_1\| > \alpha(z_0 - c)$ and \textit{(ii)} $\|z_1\| > -\alpha(z_0 - c)$, i.e., that $\| \hat z_1 \| \leq \alpha (\hat z_0 + c - c) = \alpha \hat z_0$.
Indeed, 
\begin{equation*}
    \begin{aligned}
        \| \hat z_1 \| = \big\| \tau \fracg{z_1}{\|z_1\|} \big\| = | \tau | \becauseof{\textit{(ii)}} \tau, \quad
            \alpha \hat z_0 = \tau.
    \end{aligned}
\end{equation*}
Finally, we show that $\cP_\cK(z) = (\hat z_0 + c, \hat z_1)$.
Pick an arbitrary $y = (y_0, y_1) \in \cK$ and fix $z = (z_0, z_1)$ satisfying \textit{(i)} and \textit{(ii)}.
The proof follows from the equation displayed at the bottom of the next page, where the steps marked with $(*)$ hold because $\| z_1 \| - \alpha \hat{z}_0 = \fracg{1}{2} (\| z_1 \| - \alpha(z_0 - c) ) \becauseof[>]{\textit{(i)}} 0$. \qed

\subsection{Proof of Theorem \ref{theo:proj:D}} \label{app:proof:proj:D}

Since we assume that $\zLB \leq \zUB$, we have from Lemma \ref{lemma:nonempty:D} that $\cD$ is non-empty.
Thus, the projection of $z$ onto $\cD$ exists and is unique, since it is a closed convex set.
Moreover, since it is the intersection of two closed convex sets, the iterates of Algorithm \ref{alg:Dykstra} will converge to the projection of $z$ onto $\cD$ \cite[Theorem 2]{Boyle_1986}.
We will now show that, in fact, Algorithm \ref{alg:Dykstra} will converge after a single iteration.
That is, taking first the projection onto $\cKu$ and then onto $\cKl$, we show that $v^2 = w^1$, which along with Lemma \ref{lemma:convergence:Dykstra} proves the claim.

Let us denote $v^k = (v_0^k, v_1^k) \in \R \times \R^{n-1}$ and $w^k = (w_0^k, w_1^k) \in \R \times \R^{n-1}$.
We divide the proof into several cases.
    
\noindent \textbf{\textit{Case 1:}} $z \in \cKl$ and $z \in \cKu$. This case is trivial, since $z \in \cD$.

\noindent \textbf{\textit{Case 2:}} $z \in \cKu$ and $\cP_{\cKl}(v^1)$ is obtained from \eqref{eq:proj:K:2}. Then, $v^1 = z$, $p^1 = 0$, $w^1 = (\zUB, 0)$. Since $\zUB \geq \zLB$, it is easy to see that $(\zUB, 0) \in \cKu$. Therefore, $v^2 = \cP_{\cKu}((\zUB, 0) + p^1) = (\zUB, 0) = w^1$.

\noindent \textbf{\textit{Case 3:}} $z \in \cKu$ and $\cP_{\cKl}(v^1)$ is obtained from \eqref{eq:proj:K:3}. Then, $v^1 = z$, $p^1 = 0$,
\begin{equation*}
w^1 = \frac{1}{2}( \zUB - z_0 + \|z_1\|) \left( -1, \fracg{z_1}{\|z_1\|} \right) + \left( \zUB, 0 \right),
\end{equation*}
and $v^2 = \cP_{\cKu}(w^1 + p^1) = \cP_{\cKu}(w^1)$. All that remains is to show that $w^1 \in \cKu$, which we prove by contradiction.
Assume that $w^1 \not\in \cKu$, i.e., $\|w^1_1\| > w^1_0 - \zLB$, which can be expressed~as 
\begin{equation} \label{eq:case:3}
\left| \frac{1}{2}(\zUB - z_0 + \|z_1\|) \right| > \frac{1}{2} (z_0 - \zUB - \|z_1\|) + \zUB - \zLB.
\end{equation}
If $\frac{1}{2}(\zUB - z_0 + \|z_1\|) > 0$, then \eqref{eq:case:3} leads to the contradiction $|| z_1 \| > z_0 - \zLB$, since we assume that $z \in \cKu$.
If it is smaller of equal to $0$, then \eqref{eq:case:3} leads to the contradiction $\zLB > \zUB$. 

\noindent \textbf{\textit{Case 4:}} $\cP_{\cKu}(z)$ is obtained from \eqref{eq:proj:K:2}. Then, $v^1 = (\zLB, 0)$, $p^1 = z - (\zLB, 0)$. Since $\zLB \leq \zUB$, it is easy to see that $(\zLB, 0) \in \cKl$. Therefore, $w^1 = \cP_{\cKl}(v^1) = (\zLB, 0)$, which leads to
\begin{align*}
    v^2 &= \cP_{\cKu}(w^1 + p^1) = \cP_{\cKu} ( (\zLB, 0) + z - (\zLB, 0)) \\
        &= \cP_{\cKu}(z) = (\zLB, 0) = w^1.
\end{align*}

\noindent \textbf{\textit{Case 5:}} $\cP_{\cKu}(z)$ is obtained from \eqref{eq:proj:K:3}. Then, 
\begin{equation} \label{eq:case:5:z-}
v^1 = \frac{1}{2}( z_0 - \zLB + \|z_1\|) \left( 1, \fracg{z_1}{\|z_1\|} \right) + \left( \zLB, 0 \right),
\end{equation}
and $p^1 = z - v^1$. We now distinguish between two subcases.
\begin{enumerate}[label=(\roman*)]
    \item $v^1 \in \cKl$. Then, $w^1 = v^1$, which leads to, $v^2 = \cP_{\cKu}(v^1 + z - v^1) = \cP_{\cKu}(z) = v^1 = w^1$.
     \item $v^1 \notin \cKl$. From \eqref{eq:case:5:z-}, the use of simple algebra leads to $\|v^1_1\| = v^1_0 - \zLB$.
         If $\zUB > \zLB$ this implies that $\|v^1_1\| > v^1_0 - \zUB$, and thus $w^1$ is computed using \eqref{eq:proj:K:3}, i.e.,
    \begin{align*}
        w^1 &= \frac{1}{2} (\zUB - v^1_0 + v^1_0 - \zLB) \left( -1, \fracg{v^1_1}{\|v^1_1\|} \right) + (\zUB, 0) \\
            &= \frac{1}{2} \left( \zUB + \zLB, \fracg{\zUB - \zLB}{\|v^1_1\|} v^1_1 \right),
    \end{align*}
    which along the use of simple algebra leads to $\|w^1_1\| = w^1_0 - \zLB$.
    If, on the other hand, $\zUB = \zLB$, then $w^1$ is computed using \eqref{eq:proj:K:2}, i.e., $w^1 = (\zUB, 0) = (\zLB, 0)$.
    In both cases it is easy to see that $w^1 \in \cKu$ and thus $v^2 = \cP_{\cKu}(w^1) = w^1$.
\end{enumerate}
These cases cover all the possibilities for projecting first onto $\cKu$ and then onto $\cKl$. \qed

\end{appendix}

\bibliographystyle{IEEEtran}
\bibliography{IEEEabrv,BibKrupa}

\end{document}